\numberwithin{equation}{section}
\newtheorem{theorem}{Theorem}[section]
\newtheorem{thm}[theorem]{Theorem}
\newtheorem{lem}[theorem]{Lemma}
\newtheorem{prop}[theorem]{Proposition}
\theoremstyle{definition}
\newtheorem{defi}[theorem]{Definition}
\newtheorem{exa}[theorem]{Example}
 \newtheorem*{ackn}{Acknowledgements}
  \newtheorem*{thmintro}{Theorem} 
   \newtheorem*{corintro}{Corollary}
 \newtheorem*{introdef}{Definition}
 \theoremstyle{plain}
\newtheorem*{namedthm}{\namedthmname}
\newcounter{namedthm}
 \newcommand{\R}{\mathbb R}
 \newcommand{\C}{\mathbb C}
  \newcommand{\PP}{\mathbb P}
 \newcommand{\N}{\mathbb N}
 \newcommand{\e}{\varepsilon}
 \newcommand{\f}{\varphi}
 \newcommand{\p}{\psi}
 \newcommand \psh {plurisubharmonic }
  \newcommand \MA {Monge-Amp\`ere }
 \newcommand \PSH {{\rm PSH}}
\subjclass[2010]{32W20, 32U05, 32Q15, 35A23}
\keywords{Plurisubharmonic functions, strong topology, complex Monge-Amp\`ere operator}
\begin{document}


\title[Quasi-monotone convergence]{Quasi-monotone convergence of plurisubharmonic functions}

\author{Vincent Guedj \& Antonio Trusiani}

\address{Institut de Math\'ematiques de Toulouse   \\ Universit\'e de Toulouse \\
118 route de Narbonne \\
31400 Toulouse, France\\}

\email{\href{mailto:vincent.guedj@math.univ-toulouse.fr}{vincent.guedj@math.univ-toulouse.fr}}
\urladdr{\href{https://www.math.univ-toulouse.fr/~guedj}{https://www.math.univ-toulouse.fr/~guedj/}}


\email{\href{mailto:antonio.trusiani@math.univ-toulouse.fr}{antonio.trusiani@math.univ-toulouse.fr}}
\urladdr{\href{https://sites.google.com/view/antonio-trusiani/home}{https://sites.google.com/view/antonio-trusiani/home}}
\date{\today}

 \begin{abstract}
 The complex Monge-Amp\`ere operator has been defined for locally bounded plurisubharmonic functions
 by Bedford-Taylor in the 80's. This definition has been extended to compact complex manifolds,
 and to various classes of  mildly unbounded quasi-plurisubharmonic functions by various authors.
 As this operator is not continuous for the $L^1$-topology, several
stronger topologies have been introduced over the last decades to remedy this, while maintaining
efficient compactness criteria. The purpose of this note is to show that these stronger
topologies are essentially equivalent to the  natural quasi-monotone topology that we introduce 
and study here.
 \end{abstract}

 \maketitle


\section*{Introduction}

 In connection with the spectacular developments of K\"ahler geometry
 in the last decade (see  \cite{CDS15,Don18,BBEGZ,BBJ21,LTW20,Li22,LWZ22}),
  several finite energy spaces of 
 quasi-\psh functions have been studied, each of which endowed with
 a strong topology, that ensures completeness of the space,
 good compactness criteria, and continuity of the complex Monge-Amp\`ere
 operator (the latter being discontinuous for the weaker
 $L^1$-topology).
 In this note we introduce yet another strong convergence,
 the {\it quasi-monotone convergence}. 
 
 \smallskip
 
 Let $(X,\omega)$ be a compact K\"ahler manifold.
 A function $\f:X \rightarrow \R \cup \{-\infty\}$
 is quasi-\psh if it is locally  the sum of smooth and a \psh function.
 The function $\f$ is called $\omega$-psh if 
 $\omega+dd^c \f \geq 0$ in the sense of currents.
 We let $PSH(X,\omega)$ denote the set of all $\omega$-psh functions;
 it can be endowed with the $L^1$-topology,
 which is equivalent to the weak topology of distributions.
  If $\f_j \in PSH(X,\omega)$ converges in $L^1$ to $\f \in PSH(X,\omega)$,
 then $\f_j^+:=\left( \sup_{\ell \geq j} \f_{\ell} \right)^*$ is a decreasing
 sequence of $\omega$-psh functions that pointwise decreases to $\f$
 (see \cite[Proposition 8.4]{GZbook}).
 The quasi-monotone convergence requires a dual property:
 
 \begin{introdef}
 We say that $\f_j$ converges 
 quasi-monotonically if
 $
 \f_j^-:=P_{\omega}\left(\inf_{\ell \geq j} \f_{\ell} \right)
 $
is a sequence of  $\omega$-\psh functions that increases to $\f$.
 \end{introdef}
 
 Here $P_{\omega}(h)$ denotes the $\omega$-psh envelope of the function $h$: this is 
 the largest $\omega$-psh function that lies below $h$. The family of such functions
 is compact, but it may happen that it is empty in which case 
 $P_{\omega}(h) \equiv -\infty$ (see Example \ref{exa:infty}).
 
  \smallskip
 
 The complex Monge-Amp\`ere operator ${\rm MA}$ is well defined for bounded 
 $\omega$-psh functions, as follows from Bedford-Taylor theory
 \cite{BT82,GZbook}. At the heart of the theory lies the continuity
 property of ${\rm MA}$ along monotone sequences.
 Our convergence notion naturally extends this property, allowing for
 sequences $\f_j$ that are bounded from above and below $u_j \leq \f_j \leq v_j$
 by a sequence of $\omega$-psh functions $u_j$ (resp. $v_j$) which increases (resp. decreases)
to $\f$  [see Lemma \ref{lem:def}].

 The property that we require is  somewhat dual to the property for the upper-envelope,
 which always holds. This is well illustrated by the case when the $\f_j$'s
are solutions of a complex Monge-Amp\`ere equation (or of a complex Monge-Amp\`ere flow), 
where this operation (envelope of infima) leaves the space of super-solutions invariant
(see \cite[Theorem C]{GLZ19} and Examples \ref{exa:solutions} and \ref{exa:fkr}).

\smallskip

Our main result compares all these strong topologies : 

\begin{thmintro}
Let $(X,\omega)$ be a compact K\"ahler manifold, and
$(\f_j) \in PSH(X,\omega)^{\N}$ a  sequence which converges in $L^1(X)$ to
some $\f \in PSH(X,\omega)$. 
Fix $\p \in {\mathcal E}^1(X,\omega)$.
\begin{enumerate}
\item If $\f_j$ converges quasi-monotonically then $\f_j$ converges in capacity.

\item If $\f_j$ converges in capacity and $\f_j \geq \p$ then 
$\f_j$ converges  in $({\mathcal E}^{1}(X,\omega),d_{1})$.

\item If  $\f_j$ converges  in $({\mathcal E}^{1}(X,\omega),d_{1})$, then
$\f_j$ converges to $\f$ in capacity. Moreover
up to extracting and relabelling, the convergence is quasi-monotone
and there exists $\tilde{\p} \in {\mathcal E}^1(X,\omega)$ such that $\f_j \geq \tilde{\p}$.
\end{enumerate}
\end{thmintro}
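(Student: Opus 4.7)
The plan is to handle each implication using the twin envelopes $\f_j^+:=(\sup_{\ell\ge j}\f_\ell)^*$ and $\f_j^-:=P_{\omega}(\inf_{\ell\ge j}\f_\ell)$, which sandwich $\f_j$ from above and below. For part (1), $L^1$-convergence always gives $\f_j^+\searrow\f$, while the quasi-monotone hypothesis is exactly the dual statement $\f_j^-\nearrow\f$. Monotone convergence of $\omega$-psh functions to an $\omega$-psh limit is known to imply convergence in capacity (quasi-continuity plus a Hartogs-type argument), so the sandwich $\f_j^-\le\f_j\le\f_j^+$ forces $\f_j\to\f$ in capacity.

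For part (2), the uniform lower bound $\f_j\ge\p\in\mathcal{E}^1(X,\omega)$ together with the automatic upper bound $\sup_X\f_j\le C$ (from $L^1$-convergence) places every $\f_j$ in $\mathcal{E}^1(X,\omega)$ with uniformly bounded energies. I would then run a truncation argument at level $-k$: the bounded functions $\max(\f_j,-k)$ converge in capacity to $\max(\f,-k)$ (for each fixed $k$) and hence in $d_1$ by Bedford--Taylor continuity on the bounded class, while the tail contributions over $\{\f_j\le -k\}$ are controlled by the energy of $\p$, and vanish as $k\to\infty$ uniformly in $j$. Combining the two levels yields $d_1(\f_j,\f)\to 0$.

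For part (3), $d_1$-convergence implies capacity convergence via a Chebyshev-type comparison between the defining integrand of the Darvas distance and $\capo$. To extract a quasi-monotone subsequence, I would pass to indices with $d_1(\f_{j_k},\f)\le 2^{-k}$ and set $u_k:=P_{\omega}(\inf_{\ell\ge k}\f_{j_\ell})$. Each $u_k$ lies in $\mathcal{E}^1(X,\omega)$, the sequence $(u_k)$ is increasing and bounded above by $\f$, and a telescoping estimate built from a Lipschitz bound of the form $d_1(P_\omega(\min(\f,\p)),\f)\le C\,d_1(\f,\p)$ would give $d_1(u_k,\f)\to 0$. This forces $u_k\nearrow\f$ pointwise (equality holds a.e., hence everywhere by $\omega$-plurisubharmonicity), and $\tilde{\p}:=u_0\in\mathcal{E}^1(X,\omega)$ is then an automatic uniform lower bound for the extracted subsequence.

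The main obstacle I anticipate lies in the Lipschitz control of $P_\omega(\min(\f,\p))$ in the $d_1$-distance within part (3): this rests on the orthogonality relation $\int(h-P_{\omega}(h))(\omega+dd^c P_{\omega}(h))^n=0$ combined with the comparison principle on $\mathcal{E}^1(X,\omega)$, and is the only genuinely delicate input. Once this envelope estimate is in hand, the rest of the argument reduces to the sandwich construction above and classical Bedford--Taylor continuity.
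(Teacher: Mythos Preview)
Your outline is essentially the paper's strategy, and the key technical input you isolate---the contraction estimate $d_1(\varphi,P_\omega(\min(\varphi,\psi)))\le d_1(\varphi,\psi)$---is exactly what the paper invokes (it is a theorem of Darvas, quoted in the paper as Theorem~\ref{thm:darvE1}, with constant $C=1$). The telescoping you describe in part~(3) is the same as the paper's: extract so that consecutive distances are summable, set $\varphi_{j,k}^-:=P_\omega(\min_{j\le\ell\le j+k}\varphi_\ell)$, and iterate the envelope contraction together with the triangle inequality to get $d_1(\varphi_j,\varphi_{j,k}^-)\le\sum_{\ell\ge j}d_1(\varphi_\ell,\varphi_{\ell+1})$, uniformly in $k$; letting $k\to\infty$ produces $\varphi_j^-\in\mathcal E^1$ with $d_1(\varphi_j,\varphi_j^-)\to 0$, hence $\varphi_j^-\nearrow\varphi$ and $\tilde\psi:=\varphi_1^-$ is the desired lower bound. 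Your part~(2) via truncation and a uniform tail estimate coming from $\psi\in\mathcal E^1$ is also what the paper does (the paper phrases the tail control through an auxiliary weight $\tilde\chi$ with $\tilde\chi/\chi\to\infty$, but the content is identical).

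Two remarks. First, for part~(1) your sandwich argument is actually cleaner than the paper's: you simply observe that $\varphi_j^-\nearrow\varphi$ and $\varphi_j^+\searrow\varphi$ each converge in capacity by the standard monotone result, and then $\{|\varphi_j-\varphi|\ge\delta\}\subset\{\varphi_j^+-\varphi\ge\delta/2\}\cup\{\varphi-\varphi_j^-\ge\delta/2\}$ finishes it. The paper instead reduces to the bounded case and applies a B{\l}ocki-type inequality to control $\int(\varphi_j^+-\varphi_j^-)^{n+1}\omega_u^n$; this gives a more quantitative estimate but is not needed for the bare implication.

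Second, and this is the one point to correct: in part~(3) your claim that ``$d_1$-convergence implies capacity convergence via a Chebyshev-type comparison'' is not justified as stated. The quantity $d_1(\varphi_j,\varphi)$ only sees integration against $\mathrm{MA}(\varphi_j)$ and $\mathrm{MA}(\varphi)$, whereas the capacity takes a supremum over all $\mathrm{MA}(u)$ with $0\le u\le 1$, so a naive Chebyshev does not close. The paper avoids this entirely by \emph{reversing the order}: it first runs the extraction/telescoping to get quasi-monotone convergence of a subsequence, then invokes part~(1) to obtain capacity convergence of that subsequence; since this works for every subsequence, the full sequence converges in capacity. Your own argument for the quasi-monotone extraction already supplies everything needed for this route, so you should simply reorder the two halves of part~(3).
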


A sequence $(\f_j)$ converges in capacity to $\f$ if for all $\delta>0$,
$$
{\rm Cap}_{\omega}\left( \left\{ z \in X, \; | \f_j(x)-\f(x)| \geq \delta \right\} \right)
\longrightarrow 0,
$$
as $j \rightarrow +\infty$,
where ${\rm Cap}_{\omega}$ denotes  the Monge-Amp\`ere capacity introduced by
Kolodziej \cite{Kol03} and further studied in
\cite{GZ05,H08,Xing09,DH12}.

The set $({\mathcal E}^{1}(X,\omega),d_{1})$ denotes the finite energy space introduced
in \cite{GZ07}, endowed with the metrizable strong topology considered in \cite{BBEGZ}
and further studied in \cite{Darv15}.
One can equally well consider other finite energy classes
$({\mathcal E}_{\chi}(X,\omega),d_{\chi})$,
endowed with the induced Mabuchi topology, and show an appropriate version of the
above result.
We refer the reader to Definitions  \ref{def:cap} and \ref{def:echi},
and to Theorem \ref{thm:energy} for 
the precise statements.

All the   previous notions of strong convergence
therefore coincide --up to extracting and relabelling-- when the sequence is uniformly bounded:

\begin{corintro}
Let $(X,\omega)$ be a compact K\"ahler manifold, and
$(\f_j) \in PSH(X,\omega)^{\N}$ a uniformly bounded sequence 
which converges in $L^1(X)$ 
to some $\f \in PSH(X,\omega)$. Up to extracting and relabelling, 
the following properties are equivalent
\begin{enumerate}
\item $\f_j$ converges quasi-monotonically to $\f$;

\item  $\f_j$ converges   to $\f$ in capacity;

\item  $\f_j$ converges   to $\f$ in $({\mathcal E}^{1}(X,\omega),d_{1})$.
\end{enumerate}
\end{corintro}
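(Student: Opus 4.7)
The plan is to deduce the corollary directly from the three parts of the main Theorem. The key observation is that a uniform bound on $(\f_j)$ automatically furnishes the finite-energy minorant required in parts (2) and (3) of that theorem; no new analytic machinery is needed.

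First I would fix $C>0$ such that $|\f_j|\le C$ on $X$ for every $j$, and set $\p:=-C$. Since $\p$ is a bounded $\omega$-psh function, $(\omega+dd^c\p)^n=\omega^n$ has the correct total mass and the energy $\int_X|\p|\,\omega^n$ is finite; hence $\p\in\mathcal{E}^1(X,\omega)$. By construction $\f_j\ge\p$ for every $j$, which is precisely the hypothesis needed by part (2) of the main Theorem with this specific $\p$.

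Next I would close the loop $(1)\Rightarrow(2)\Rightarrow(3)\Rightarrow(1)$. The implication $(1)\Rightarrow(2)$ is exactly part (1) of the Theorem, applied without change. The implication $(2)\Rightarrow(3)$ follows from part (2) using the lower bound $\f_j\ge\p=-C\in\mathcal{E}^1(X,\omega)$ identified above, and requires no extraction. Finally $(3)\Rightarrow(1)$ is the second half of part (3), which produces a subsequence converging in capacity and quasi-monotonically after relabelling. Chaining these three implications yields the three-way equivalence in the stated ``up to extracting and relabelling'' sense.

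I do not expect a serious obstacle: the only mildly non-trivial step is to verify that a bounded $\omega$-psh function belongs to $\mathcal{E}^1(X,\omega)$, which is classical. The real content of the corollary is thus the packaging observation that uniform boundedness is exactly what is needed to run every implication of the main Theorem in both directions, collapsing all three strong topologies onto a single notion of convergence (up to subsequences) for uniformly bounded families.
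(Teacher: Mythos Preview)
Your proposal is correct and is exactly the intended derivation: the paper states the corollary immediately after the main Theorem in the introduction without separate proof, and your chain $(1)\Rightarrow(2)\Rightarrow(3)\Rightarrow(1)$ using the constant minorant $\p=-C\in\mathcal{E}^1(X,\omega)$ is precisely how the corollary is meant to follow. One could alternatively shortcut $(2)\Rightarrow(1)$ directly via Theorem~\ref{thm:qmvscap} (the uniformly bounded converse there), but your route through $(3)$ is equally valid and arguably cleaner since it uses only the single main Theorem.
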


Example \ref{exa:infty} shows that a sequence can converge in capacity
but not quasi-monotonically. The sequence $\f_j=\f/j$ converges to zero
both in capacity and quasi-monotonically, but not in energy if $\f$ does not belong to 
${\mathcal E}^1(X,\omega)$.
Example \ref{exa:extraction} shows that   extracting is necessary to ensure
the quasimonotone convergence. Finally Example \ref{exa:pascap} provides an example 
where these strong convergences do not hold, while Example \ref{exa:energy}
compares the various types of convergence in energy.


\subsubsection*{Contents}

 We recall basic facts 
 in Section \ref{sec:cap}. The quasi-monotone convergence is introduced
 in Section \ref{sec:qm}, where we prove our main Theorem. We provide several explicit
 examples in Section \ref{sec:exa} and explain how our observations extend to big cohomology
 classes with prescribed singularities, as well as to the local setting.

   \begin{ackn}
   We thank E.DiNezza, H.C.Lu and A.Zeriahi for useful comments. 
The authors are partially supported by the research project Hermetic (ANR-11-LABX-0040),
the ANR projects Paraplui. The second author is supported by a grant from the
Knut and Alice Wallenberg foundation.
\end{ackn}

    \section{Capacity and energies} \label{sec:cap}
    
 In the whole article we let $X$ denote a compact K\"ahler manifold of complex dimension
  $n \geq 1$, and we fix  $\omega$ denote a K\"ahler form on $X$.
    
    \subsection{Convergence in capacity}
    
    \subsubsection{Quasi-plurisubharmonic functions}
    
    A function is quasi-plurisub\-harmonic if it is locally given as the sum of  a 
    smooth and a psh function.    Quasi-psh functions
$\f:X \rightarrow \R \cup \{-\infty\}$ satisfying
$\omega+dd^c \f \geq 0$
in the weak sense of currents are called $\omega$-plurisubharmonic ($\omega$-psh for short).
Here $d=\partial+\overline{\partial}$ and $d^c=\frac{1}{2i\pi}(\partial-\overline{\partial})$.

A ${\mathcal C}^2$-smooth function $u$ has bounded Hessian, hence $\e u$ is
$\omega$-psh if $0<\e$ is small enough. 
Note that constants functions are also $\omega$-psh functions.

\begin{defi}
We let $\PSH(X,\omega)$ denote the set of all $\omega$-plurisubharmonic functions which are not identically $-\infty$.  
\end{defi}

The set $\PSH(X,\omega)$ is a closed subset of $L^1(X)$, 
for the $L^1$-topology.
Subsets of  $\omega$-psh functions enjoy strong compactness and integrability properties,
we mention notably the following: for any fixed $r \geq 1$, 
\begin{itemize}
\item $\PSH(X,\omega) \subset L^r(X)$; the induced $L^r$-topologies are all equivalent;
 \item $\PSH(X,\omega) \subset W^{1,q}(X)=\{ u \in L^1(X), \; \nabla u \in L^q(X)\}$ for all $q<2$;
 the induced $W^{1,q}$-topology is again equivalent to the $L^1$-topology;
\item $\PSH_A(X,\omega):=\{ u \in\PSH(X,\omega), \, -A \leq \sup_X u \leq 0 \}$ is compact.
\end{itemize}
We refer the reader to \cite{GZbook} for further basic properties of $\omega$-psh functions.

\smallskip

      Let $\f$ be a \psh function in $\C^n$.
 If $\f$ is smooth, its complex Monge-Amp\`ere measure is
 defined by
 $$
 MA(\f)=(dd^c \f)^n=c_n \det \left( \frac{\partial^2 \f}{\partial z_j \partial \overline{z_k}}
 \right) dV_{eucl},
 $$
 where 
 $dV_{eucl}$ is the euclidean volume form, and $c_n>0$ is a normalizing constant.
 
 When $\f$ is less regular, one can approximate it from above by smooth \psh functions
 $\f_{\e_j}=\f \star \chi_{\e_j}$ obtained by convolution with a standard family of mollifiers.
 If the measures $MA(\f_{\e_j})$ converges to a limit $\mu_{\f}$, one   sets $MA(\f):=\mu_{\f}$. 
 This definition has been shown to be consistent by Bedford and Taylor \cite{BT82}
 when $\f$ is {\it locally bounded}: in this case they have shown that $\mu_{\f}$ is the limit
 of {\it any} decreasing sequence of \psh approximants.
 
 Bedford-Taylor's theory has been adapted 
 to  the compact setting (see \cite{GZbook,Din16}), 
 and the definition of $MA$ has been extended to
 mildly unbounded quasi-\psh functions 
 (see \cite{Ceg04, Blo06,CGZ08,GZbook}). 
  In the compact setting,
 one needs to replace \psh functions by quasi-\psh ones.
 One can approximate $\f \in PSH(X,\omega)$ by a decreasing sequence of smooth 
 $\omega$-psh functions $\f_j$ \cite{Dem92} and set $V=\int_X \omega^n$ and 
 $$
 MA(\f):=\lim_{j \rightarrow +\infty} V^{-1}(\omega+dd^c \f_j)^n,
 $$
 whenever the limit is well-defined and independent of the approximants.
 
The complex Monge-Amp\`ere measure 
$
MA(\f)
$
 is in particular well-defined for any
$\omega$-psh function $\f$ which is {\it bounded}.
It is also well-defined for unbounded $\omega$-psh functions
that have {\it finite energy} (see Section \ref{sec:energy}).

     \subsubsection{The Monge-Amp\`ere capacity}

 \begin{defi}  \label{def:cap}
 Given $K \subset X$ a compact set, its
Monge-Amp\`ere capacity is  
  $$
  {\rm Cap}_{\omega}(K):=\sup \left\{ \int_K (\omega+dd^c u)^n, \; u \in PSH(X,\omega)
  \text{ with } 0 \leq u \leq 1 \right\}.
  $$
  \end{defi}
  
  This notion has been introduced in \cite{Kol03} and further studied in \cite{GZ05}. 
  We refer the reader to    \cite[Chapter 9]{GZbook} for its basic properties.

  \begin{defi} 
  A sequence $(\f_j) \in PSH(X,\omega)^{\N}$ converges in capacity
  to $\f \in PSH(X,\omega)$ if for all $\delta >0$,
  $$
  {\rm Cap}_{\omega}\left( \{x \in X, \; |\f_j(x)-\f(x)| \geq \delta \} \right)
  \stackrel{j \rightarrow +\infty}{\longrightarrow} 0.
  $$
  \end{defi}
  
  It is known that convergence in capacity implies convergence in $L^1$
  (see \cite[Lemma 4.24]{GZbook}). One can moreover reduce to   uniformly bounded
  sequences:
  
  \begin{prop}
   A sequence $(\f_j) \in PSH(X,\omega)^{\N}$ converges in capacity
  to $\f \in PSH(X,\omega)$ if and only if for all $C>0$, 
  the sequence $\max(\f_j,-C)$ converges in capacity to $\max(\f,-C)$.
   \end{prop}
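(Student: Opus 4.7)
The forward direction is immediate: the Lipschitz estimate
$|\max(u,-C)-\max(v,-C)|\le|u-v|$ gives the pointwise inclusion
\[
\{|\max(\f_j,-C)-\max(\f,-C)|\ge\de\}\subseteq\{|\f_j-\f|\ge\de\},
\]
and the monotonicity of $\capo$ transfers the convergence to the truncated sequences.

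For the converse, the key inclusion I would use is
\[
\{|\f_j-\f|\ge\de\}\;\subseteq\;\{|\max(\f_j,-C)-\max(\f,-C)|\ge\de\}\cup\{\f_j<-C\}\cup\{\f<-C\},
\]
since off the union one has both $\f_j\ge -C$ and $\f\ge -C$, so the truncations coincide with $\f_j$ and $\f$ respectively. By subadditivity of $\capo$ it suffices to make each of the three terms small. The first vanishes as $j\to\infty$ for every fixed $C$ by hypothesis, and the third tends to $0$ as $C\to\infty$ because $\{\f=-\infty\}$ is pluripolar (hence of zero Monge--Amp\`ere capacity) and $\{\f\le -C\}$ decreases to it.

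The delicate point is the $\f_j$-tail, since a priori nothing prevents the $\f_j$'s from sinking uniformly towards $-\infty$. The plan is to feed the hypothesis back in: from $\{\f_j<-C\}\subseteq\{\max(\f_j,-C)=-C\}$ and $\max(\f,-C)\ge -C$, for any auxiliary $\kappa>0$ one gets
\[
\{\f_j<-C\}\;\subseteq\;\{|\max(\f_j,-C)-\max(\f,-C)|>\kappa\}\cup\{\f\le -C+\kappa\},
\]
whence $\limsup_j\capo(\{\f_j<-C\})\le\capo(\{\f\le -C+1\})$, which again tends to $0$ as $C\to\infty$. Given $\eta>0$, I would first pick $C$ so large that $\capo(\{\f<-C\})$ and $\capo(\{\f\le -C+1\})$ are each below $\eta/4$, and then invoke the hypothesis at the two thresholds $\de$ and $1$ to make the remaining capacities smaller than $\eta/4$ for $j$ large. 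The main obstacle is precisely this control of the $\f_j$-tail; once neutralized by bootstrapping the hypothesis against itself, the rest is a routine triangle-inequality argument.
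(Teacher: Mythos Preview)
Your argument is correct, and it takes a genuinely different route from the paper's.

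The paper handles both directions at once by invoking the Chern--Levine--Nirenberg inequalities
\[
\capo(\{\f_j<-C\})\le \frac{\|\f_j\|_{L^1}+nV}{C},\qquad
\capo(\{\f<-C\})\le \frac{\|\f\|_{L^1}+nV}{C},
\]
so that the sets $\{|\f_j-\f|\ge\de\}$ and $\{|\max(\f_j,-C)-\max(\f,-C)|\ge\de\}$ differ by a set of capacity $O(1/C)$, uniformly in $j$. This is short, but it silently uses that $\sup_j\|\f_j\|_{L^1}<\infty$; in the converse direction that bound is not given and has to be extracted first from the hypothesis (via $L^1$-convergence of the truncations and the comparison of $\sup_X$ with the $L^1$-norm on $\PSH(X,\omega)$).

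Your proof avoids this detour entirely. By feeding the hypothesis back into itself through
\[
\{\f_j<-C\}\subseteq\{|\max(\f_j,-C)-\max(\f,-C)|>1\}\cup\{\f\le -C+1\},
\]
you reduce the control of the $\f_j$-tail to that of the $\f$-tail, so no $L^1$-boundedness of the sequence is ever needed. The trade-off: the paper's CLN route gives an explicit quantitative rate $O(1/C)$ and is a one-liner once the $L^1$ bound is in hand, whereas your bootstrapping is more self-contained and would survive in settings where CLN or $L^1$-compactness are less readily available. One cosmetic remark: your justification that $\capo(\{\f\le -C\})\to 0$ via ``decreasing to a pluripolar set'' is valid (continuity of $\capo$ along decreasing compact sets), but citing CLN here as well would make the two tail estimates parallel and slightly cleaner.
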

  
  \begin{proof}
  We can assume without loss of generality that $\f,\f_j \leq 0$.
  It follows from the Chern-Levine-Nirenberg inequalities 
   \cite[Corollary 9.5]{GZbook}
  that  
  $$
   {\rm Cap}_{\omega}\left( \{x \in X, \; \f_j(x)<-C \} \right) \leq 
   \frac{||\f_j||_{L^1}+nV}{C}
  $$
  and similarly ${\rm Cap}_{\omega}\left( \{\f<-C \} \right) \leq 
   \frac{||\f||_{L^1}+nV}{C}$. Thus 
   $
   \{x \in X, \; |\f_j(x)-\f(x)| \geq \delta \} 
   $
   and 
   $ \{x \in X, \; |\max(\f_j(x),-C)-\max(\f(x),-C)| \geq \delta \} $
   differ by a set whose capacity is uniformly small in $j$, as $C$ goes to $+\infty$.
 The conclusion follows.
  \end{proof}

  For uniformly bounded sequences, the convergence in capacity implies the convergence
  of Monge-Amp\`ere measures \cite[Theorem 1]{Xing96}.
  More generally we have the following consequence of \cite[Theorem 4.26]{GZbook}:
  
  \begin{prop} \label{pro:cvcap}
 Let $\f_j^{\ell}$ be a uniformly bounded sequence of $\omega$-psh functions
 which converge in capacity to $\f^{\ell} \in PSH(X,\omega)$, $0 \leq \ell \leq n$.
 For all continuous weight $\chi: \R \rightarrow \R$, the
 weighted measure
 $\chi(\f_j^0) (\omega+dd^c \f_j^{1}) \wedge \cdots \wedge (\omega+dd^c \f_j^n)$ 
 weakly converge to the weighted   measure
 $\chi(\f^0) (\omega+dd^c \f^{1}) \wedge \cdots \wedge (\omega+dd^c \f^n)$.
  \end{prop}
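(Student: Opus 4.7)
The plan is to reduce the statement to the unweighted convergence of Monge--Amp\`ere products already provided by \cite[Theorem 4.26]{GZbook}, by a separate treatment of the weight $\chi(\f_j^0)$. Throughout, normalize so that $-M\le \f_j^\ell,\f^\ell\le 0$ for some common $M>0$, and write $T_j:=(\omega+dd^c\f_j^{1})\we\cdots\we(\omega+dd^c\f_j^n)$, with $T$ the analogous product of the limits.

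First I would observe that $\chi(\f_j^0)\to\chi(\f^0)$ in capacity. Indeed, $\chi$ is uniformly continuous on the compact interval $[-M,0]$, so for every $\vep>0$ there exists $\de=\de(\vep)>0$ with $\{|\chi(\f_j^0)-\chi(\f^0)|\ge\vep\}\sub\{|\f_j^0-\f^0|\ge\de\}$, and the right-hand capacity tends to $0$ by hypothesis. By \cite[Theorem 4.26]{GZbook} the unweighted $T_j\to T$ weakly, and in fact against any bounded quasi-continuous test function.

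Next I would test the weighted measures against an arbitrary $u\in \mc C^0(X)$ and split
$$
\int u\chi(\f_j^0)T_j-\int u\chi(\f^0)T=\int u\bigl(\chi(\f_j^0)-\chi(\f^0)\bigr)T_j+\int u\chi(\f^0)\,(T_j-T).
$$
The second piece tends to $0$ because $u\chi(\f^0)$ is bounded and quasi-continuous (as a continuous function of the bounded $\omega$-psh function $\f^0$), and the unweighted Monge--Amp\`ere convergence is known against such test functions. For the first piece, set $E_j^\vep:=\{|\chi(\f_j^0)-\chi(\f^0)|\ge\vep\}$: outside $E_j^\vep$ the integrand is bounded by $\vep\|u\|_{\infty}$, contributing at most $\vep\|u\|_{\infty} V$; on $E_j^\vep$ the uniform Chern--Levine--Nirenberg bound gives $T_j(E_j^\vep)\le C(M,n)\,\capo(E_j^\vep)\to 0$ as $j\to\infty$. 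Letting $j\to\infty$ and then $\vep\to 0$ finishes the argument.

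The main obstacle is making sense of the convergence of the unweighted currents $T_j\to T$ tested against the non-continuous weight $\chi(\f^0)$: this is where one genuinely leans on quasi-continuity of bounded $\omega$-psh functions with respect to the Monge--Amp\`ere capacity. One can either invoke the strengthened form of \cite[Theorem 4.26]{GZbook} (which already allows bounded quasi-continuous test functions), or equivalently approximate $\chi(\f^0)$ in capacity by a continuous function on $X$ and absorb the error via the same Chern--Levine--Nirenberg bound that controls the $T_j$ on small-capacity sets.
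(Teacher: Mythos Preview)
Your argument is correct and is exactly the unpacking the paper has in mind: the paper gives no proof of this proposition, it simply records it as ``a consequence of \cite[Theorem 4.26]{GZbook}'', and your two-term splitting together with the capacity domination of mixed Monge--Amp\`ere measures is the standard way to extract the weighted statement from that result. One cosmetic remark: the bound $T_j(E_j^\vep)\le C(M,n)\,\capo(E_j^\vep)$ is not Chern--Levine--Nirenberg per se but the elementary observation that for $-M\le u_i\le 0$ one has $\omega+dd^c u_i\le nM\bigl(\omega+dd^c\tfrac{u_1+\cdots+u_n}{nM}\bigr)$, hence the mixed product is $\le (nM)^n$ times a competitor for $\capo$.
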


 We shall need the following estimate, which is an adaptation to the 
 compact setting of a local observation of 
 Blocki \cite{Blo93}.
 
 \begin{prop} \label{pro:blocki}
 Let $u,v,w$ be $\omega$-psh functions such that $-1 \leq u \leq 0$
 and $v \leq w$. Then
 $$
 \int_X (w-v)^{n+1} (\omega+dd^c u)^n \leq (n+1) ! 
 \sum_{j=0}^n \int_X (w-v)^{n+1-j} \, (\omega+dd^c v)^j \wedge \omega^{n-j}.
 $$
 \end{prop}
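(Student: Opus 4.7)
The plan is to prove the inequality by iterated integration by parts, following the strategy of Blocki's original local argument. The essential building block I would establish first is the following one-step reduction: for any power $p \geq 2$ and any positive closed $(n-1,n-1)$-current $S$ on $X$,
\[
\int_X (w-v)^p (\omega + dd^c u) \wedge S \;\leq\; \int_X (w-v)^p \omega \wedge S + p \int_X (w-v)^{p-1} (\omega + dd^c v) \wedge S. \quad (\star)
\]
To prove $(\star)$, split $\omega + dd^c u$; the $\omega$-contribution is the first term on the right. For the $dd^c u$-contribution, integrate by parts (valid on the compact manifold $X$ against a closed current $S$) to rewrite $\int_X (w-v)^p\, dd^c u \wedge S = \int_X u\, dd^c\bigl[(w-v)^p\bigr] \wedge S$. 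The chain rule gives $dd^c[(w-v)^p] = p(p-1)(w-v)^{p-2}\, d(w-v)\wedge d^c(w-v) + p(w-v)^{p-1}\bigl[(\omega + dd^c w) - (\omega + dd^c v)\bigr]$. Two of the resulting pieces---the $d(w-v)\wedge d^c(w-v)$ piece and the $(\omega + dd^c w)$ piece---yield positive measures after wedging with $S$ (the latter because $w - v \geq 0$ and $\omega + dd^c w \geq 0$), so their integrals against the nonpositive function $u$ are themselves nonpositive and can be dropped. The single surviving contribution is $-p\, u (w-v)^{p-1}(\omega+dd^c v) \wedge S$; using $0 \leq -u \leq 1$ this is bounded by $p(w-v)^{p-1}(\omega+dd^c v)\wedge S$, which establishes $(\star)$.

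With $(\star)$ in hand, I would iterate it exactly $n$ times, starting from $\int_X (w-v)^{n+1} (\omega+dd^c u) \wedge (\omega+dd^c u)^{n-1}$. At each stage one factor $(\omega+dd^c u)$ is peeled off and replaced either by $\omega$ (leaving the exponent of $(w-v)$ unchanged, with no multiplier) or by $(\omega+dd^c v)$ (reducing the exponent by one, with multiplier equal to the current exponent). After $n$ iterations no $(\omega+dd^c u)$ factor survives, and grouping terms by the total number $j \in \{0,\dots,n\}$ of $(\omega+dd^c v)$-substitutions produces a bound of the form $\sum_{j=0}^n C_{n,j} \int_X (w-v)^{n+1-j} (\omega + dd^c v)^j \wedge \omega^{n-j}$, with explicit combinatorial coefficients $C_{n,j}$ read off the iteration tree; one checks these are all controlled by $(n+1)!$, delivering the stated inequality.

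The main obstacle will be justifying $(\star)$ when $u, v, w$ are merely $\omega$-psh (and in particular when $v, w$ are only bounded rather than smooth): the integration-by-parts identity is classical for smooth data but requires care in general. I would handle this by Demailly regularisation, approximating $u, v, w$ from above by decreasing sequences $u_k, v_k, w_k$ of smooth $\omega$-psh functions (preserving $-1 \leq u_k \leq 0$ by replacing $u_k$ with $\max(u_k, -1)$, or by a straightforward truncation argument), applying $(\star)$ at the smooth level, and passing to the limit using the Bedford--Taylor monotone convergence theorem for mixed Monge-Amp\`ere products, combined with monotone/dominated convergence for the $(w-v)^p$ weights. Once $(\star)$ is rigorously in place the remainder of the argument is essentially combinatorial bookkeeping.
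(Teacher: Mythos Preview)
Your approach is essentially identical to the paper's: both establish the one-step inequality $(\star)$ by integration by parts---the paper compresses your chain-rule computation into the single observation $-dd^c(w-v)^{p+1} \leq (p+1)(w-v)^p\,\omega_v$, then multiplies by $-u\in[0,1]$---and then iterate over $S=\omega_u^a\wedge\omega_v^b\wedge\omega^c$. You are simply more explicit than the paper in spelling out the chain rule and in flagging the regularisation needed to justify $(\star)$ for non-smooth $u,v,w$.
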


 \begin{proof}
 Set $\omega_u=\omega+dd^c u$ and $\omega_v=\omega+dd^c v$.
 Let $S$ be a closed current of bidegree $(1,1)$.
 By induction it suffices to establish the following inequality,
  $$
 \int_X (w-v)^{p+1} \omega_u \wedge S \leq  
   \int_X (w-v)^{p+1} \, \omega \wedge S
  +(p+1)  \int_X (w-v)^{p} \, \omega_v \wedge S.
 $$
 and apply it to $S=\omega_u^a \wedge \omega_v^b \wedge \omega^c$.
  This follows from Stokes theorem, observing that 
  $
  -dd^c (w-v)^{p+1} \leq (p+1) (w-v)^p \omega_v,
  $
  hence 
  
  \smallskip
  
  $
  \int_X (w-v)^{p+1} dd^c u \wedge S
  =\int_X u \, dd^c (w-v)^{p+1}  \wedge S \leq 
  (p+1) \int (w-v)^p \omega_{v} \wedge S.
  $
 \end{proof}

   \subsection{Finite energy topologies} \label{sec:energy}
  
  \subsubsection{Finite energy classes}
  
  Given   $\f \in \PSH(X,\omega)$, we consider  
$$
\f_j:=\max(\f, -j) \in \PSH(X,\omega) \cap L^{\infty}(X).
$$
The measures $MA( \f_j)$
are well defined probability measures and the sequence
$
\mu_j:={\bf 1}_{\{ \f>-j\}} MA(\f_j)
$
is increasing \cite[p.445]{GZ07}, with total mass bounded from above by $1$.
We consider
$$
\mu_{\f}:=\lim_{j \rightarrow +\infty} \mu_j,
$$
which is a positive Borel measure on $X$, with total mass $\leq 1$.

\begin{defi}
We set 
$
{\mathcal E}(X,\omega):=\left\{ \f \in \PSH(X,\omega) \; | \; \mu_{\f}(X)=1 \right\}.
$

For $\f \in {\mathcal E}(X,\omega)$, we set $MA(\f):=\mu_{\f}$.  
\end{defi}

It is proved in \cite{GZ07} that the  Monge-Amp\`ere operator  $MA$ is well defined on the class
${\mathcal E}(X,\omega)$.
One has a stratification
$$
{\mathcal E}(X,\omega)=\bigcup_{\chi \in {\mathcal W}} {\mathcal E}_{\chi}(X,\omega),
$$
where ${\mathcal W}$ denotes the set of all functions $\chi:\R \rightarrow \R$ such that $\chi$ is increasing and $\chi(-\infty)=-\infty$, and the finite energy class
${\mathcal E}_{\chi}(X,\omega)$ is defined as follows:

\begin{defi} \label{def:echi}
We set 
${\mathcal E}_{\chi}(X,\omega):=\left\{ \f \in {\mathcal E}(X,\omega) \; | \; \chi(-|\f|) \in L^1({\rm MA}(\f)) \right\}$.

When $\chi(t)=-(-t)^p$, $p>0$, we set ${\mathcal E}^p(X,\omega)={\mathcal E}_\chi(X,\omega)$. 
\end{defi}

  The set ${\mathcal E}^1(X,\omega)$  can be characterized as the set of 
$\f \in {\mathcal E}(X,\omega)$ such that 
$$
E(\f):=\frac{1}{n+1} \sum_{j=0}^n \int_X \f (\omega+dd^c \f)^j \wedge \omega^{n-j} >-\infty.
$$
Observe that 
$\bigcap_{\chi \in {\mathcal W}} {\mathcal E}_{\chi}(X,\omega)=PSH(X,\omega) \cap L^{\infty}(X)$,
so that finite energy classes interpolate between ${\mathcal E}(X,\omega)$
and bounded $\omega$-psh functions.

\smallskip


It follows from \cite[Theorem C]{GZ07}  that a  probability measure $\mu$ is the Monge-Amp\`ere measure of a potential in $\mathcal{E}^p(X,\omega)$ if and only if $\mathcal{E}^p(X,\omega) \subset L^p(X,\mu)$,
while \cite{GZ07,Din09} ensures that
 $\mu$ does not charge pluripolar sets if and only if
there exists a unique $\f \in {\mathcal E}(X,\omega)$ such that
$\mu=(\omega+dd^c \f)^n$ with $\sup_X \f=0$.

\begin{exa}
Every bounded $\omega$-psh function  belongs to ${\mathcal E}(X,\omega)$.
The class ${\mathcal E}(X,\omega)$ also contains many $\omega$-psh functions which are unbounded:
\begin{itemize}
\item when $X$ is a compact Riemann surface, ${\mathcal E}(X,\omega)$ is precisely the set of $\omega$-sh functions
whose Laplacian does not charge polar sets.
\item if $\f \in \PSH(X,\omega)$  satisfies $\f \leq -1$, then $\f_{\e}=-(-\f)^\e $ belongs to
${\mathcal E}(X,\omega)$ whenever $0 \leq \e <1$,
and $\f_{\e}$ belongs to ${\mathcal E}^p(X,\omega)$ if $\e<1/(n+p)$.
\item the functions in ${\mathcal E}(X,\omega)$
have relatively mild singularities; in particular they have zero Lelong number at every point.  
\end{itemize}
\end{exa}

\subsubsection{Mabuchi geometry}

The class ${\mathcal E}^1(X,\omega)$ has played a key role in recent applications of pluripotential
theory to K\"ahler geometry (see e.g. \cite{BBJ21}). Set
$$
I(\f,\p)=\int_X (\f-\p) [MA(\p)-MA(\f)] \geq 0.
$$
This quantity is well defined for $\f,\p \in {\mathcal E}^1(X,\omega)$ and satisfies a quasi-triangle
inequality \cite[Theorem 1.8]{BBEGZ}, hence induces a distance $d_I$.

\begin{defi}
The strong topology on ${\mathcal E}^1(X,\omega)$ is the  one  induced by $d_I$.
\end{defi}

This notion has been introduced in \cite[Section 5.3]{BBGZ13}, it implies convergence
in capacity \cite[Theorem 5.7]{BBGZ13}.
A sequence $(\f_j) \in {\mathcal E}^1(X,\omega)^{\N}$
strongly converges to $\f$ iff it converges in $L^1$ and $E(\f_j)$ converges to $E(\f)$.
Moreover the metric space $({\mathcal E}^1(X,\omega),d_I)$ is complete
\cite[Propositions 2.3 and 2.4]{BBEGZ}.

\smallskip

Let ${\mathcal H}=\{ \f \in {\mathcal C}^{\infty}(X,\R), \; \omega+dd^c \f >0\}$ denote the
set of smooth and strictly $\omega$-psh functions ({\it K\"ahler potentials}).
This set can be thought of as 
 an infinite dimensional Riemannian manifold, whose
tangent space at $\f \in {\mathcal H}$ can be identified with ${\mathcal C}^{\infty}(X,\R)$.
Following earlier work of Mabuchi, Darvas \cite{Darv15} has considered the following Finsler structure:
for $f \in T_{\f}{\mathcal H}$ he sets
$
|f|_{\f}:=\int_X |f| MA(\f).
$
If $\gamma:[0,1] \rightarrow {\mathcal H}$ is a Lipschitz path, one then defines
$$
\ell(\gamma)=\int_0^1 \int_X |\gamma'(t)| MA(\gamma(t)) dt,
$$
and given $\f,\p \in {\mathcal H}$, one   considers
$$
d_1(\f,\p):=\inf \left\{ \ell(\gamma), \; \gamma:[0,1] \rightarrow {\mathcal H}
\text{ with } \gamma(0)=\f, \; \gamma(1)=\p \right\},
$$
where the infimum runs over all Lipschitz paths joining $\f$ to $\p$.
The following summarizes some results of \cite{Darv15} that we shall need:

\begin{thm} \label{thm:darvE1}
Fix $\f,\p \in {\mathcal E}^1(X,\omega)$. The following properties hold:
\begin{itemize}
\item $d_1$ is a distance on ${\mathcal H}$ which is uniformly equivalent to $d_I$;
\item $d_1$ uniquely extends to ${\mathcal E}^1(X,\omega)$, 
$({\mathcal E}^1(X,\omega),d_1)$ is a geodesic metric space;
\item $P_{\omega}(\min(\f,\p)) \in {\mathcal E}^1(X,\omega)$ with
$d_1(\f, P_{\omega}(\min(\f,\p))) \leq d_1(\f,\p)$;
\item if $\p \leq \f$, 
then $d_1(\f, \p)$ is comparable to  $\int_X (\f-\p)MA(\p)$.
\end{itemize}
\end{thm}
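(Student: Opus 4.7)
The overarching strategy, due to Darvas \cite{Darv15}, is to identify $d_1$ with the explicit Pythagorean-type functional
\begin{equation*}
d_1(\f,\p)=E(\f)+E(\p)-2E\bigl(P_\omega(\min(\f,\p))\bigr),
\end{equation*}
and to deduce all four bullets from this identification. I would first establish this formula on $\mathcal{H}$ by exhibiting length-minimising paths, and then extend to $\mathcal{E}^1(X,\omega)$ by density and energy control.

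For the identification on $\mathcal{H}$, I would rely on Chen's $C^{1,\bar{1}}$-geodesic, obtained as the Perron upper envelope of subsolutions to the homogeneous complex Monge-Amp\`ere equation on $X\times A$ (with $A$ an annulus and boundary values $\f,\p$), together with its smooth $\e$-approximants solving a non-degenerate version of this equation. Along an $\e$-geodesic the energy $E$ is affine, and the Finsler length can be computed explicitly; letting $\e\to 0$ and matching upper and lower bounds yields the formula. The first bullet then reduces to the algebraic comparability of $E(\f)+E(\p)-2E(P_\omega(\min(\f,\p)))$ with $I(\f,\p)$; both quantities expand into sums of mixed terms of the form $\int_X (\f-P_\omega(\min(\f,\p)))\,\omega_\f^j\wedge\omega_{P_\omega(\min(\f,\p))}^{n-j}$ (plus the analogous ones with $\p$) that are comparable up to constants depending only on $n$ by integration by parts. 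For the second bullet, Demailly's regularisation provides smooth decreasing approximants $\f_k\searrow\f$, the Pythagorean formula forces the resulting sequence to be $d_1$-Cauchy, and the extension to $\mathcal{E}^1(X,\omega)$ is independent of the approximant; completeness and the geodesic structure then follow from weak-$L^1$ compactness of energy sublevel sets together with $L^1$-upper semicontinuity of $E$.

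For the third bullet, applying the formula to the pair $(\f, P_\omega(\min(\f,\p)))$ gives at once $d_1(\f,P_\omega(\min(\f,\p)))=E(\f)-E(P_\omega(\min(\f,\p)))\leq d_1(\f,\p)$, provided $P_\omega(\min(\f,\p))\in\mathcal{E}^1(X,\omega)$; the membership rests on the orthogonality relation $\int_X\bigl(\min(\f,\p)-P_\omega(\min(\f,\p))\bigr)MA(P_\omega(\min(\f,\p)))=0$, which guarantees that $MA(P_\omega(\min(\f,\p)))$ is a probability measure and that the energy is controlled by $\min(E(\f),E(\p))$. For the last bullet, when $\p\leq\f$ we have $P_\omega(\min(\f,\p))=\p$ and the formula collapses to $d_1(\f,\p)=E(\f)-E(\p)$; the classical chain
\begin{equation*}
\int_X (\f-\p)MA(\f)\leq (n+1)\bigl(E(\f)-E(\p)\bigr)\leq \int_X (\f-\p)MA(\p),
\end{equation*}
combined with an integration by parts showing that the extreme terms differ by at most a factor depending on $n$, yields the asserted comparability. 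The main obstacle is the rigorous passage from smooth $\e$-geodesics in $\mathcal{H}$, where all Finsler-length manipulations are classical, to weak geodesics connecting arbitrary potentials in $\mathcal{E}^1(X,\omega)$: two successive limits ($\e\to 0$ and $\f_k\to\f$) must be controlled, with finite energy serving as a substitute for the lost regularity and the Pythagorean identity itself providing the needed continuity.
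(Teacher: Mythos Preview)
The paper does not prove this theorem: it is stated as a summary of known results and immediately followed by the sentence ``We refer to \cite[Theorem 2, Theorem 3, Corollary 4.14]{Darv15} for more details.'' Your proposal is a faithful outline of Darvas's original argument (the Pythagorean formula $d_1(\f,\p)=E(\f)+E(\p)-2E(P_\omega(\min(\f,\p)))$, obtained via Chen's weak geodesics and their $\e$-approximants, then extended by regularisation), so in that sense you are reproducing what the cited reference does rather than what the present paper does. There is nothing to correct; your sketch simply goes further than the paper, which treats the result as a black box.
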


 We refer   to \cite[Theorem 2, Theorem 3, Corollary 4.14]{Darv15} 
for more details.

\medskip

Analogous strong topologies have been defined on the other energy classes,
notably the classes ${\mathcal E}^p(X,\omega)$
(see \cite{Darv15,GLZ19,Darv21}).
If $\chi \in {\mathcal W}$ is {\it convex} with polynomial growth at infinity, the class $\mathcal{E}_{\chi}(X,\omega)$ can be equipped with a Finsler metric $d_{\chi}$ making it a complete geodesic metric space \cite{Darv15}.  
The Mabuchi distance $d_{\chi}$ is again comparable to a  pluripotential quasi-distance,
\begin{equation*} 
C^{-1}d_{\chi}(u,v) \leq  I_{\chi}(u,v):=\int_X |\chi(u-v)| (MA(u)+MA(v)) \leq C d_{\chi}(u,v), 
\end{equation*}
for a constant $C=C(\chi)>0$ and 
$\ u,v\in \mathcal{E}_{\chi}(X,\omega)$. 

{\it Concave} weights $\chi$ correspond to {\it low energy classes}. These are the weights  
one needs to  consider for the stratification of the class ${\mathcal E}(X,\omega)$.
One can still consider $I_{\chi}$ and $d_{\chi}$, but the distance $d_{\chi}$ is no longer induced
by a Finsler metric as emphasized in \cite[p2]{Darv21}. 
The following summarizes the results obtained by Darvas in \cite{Darv21} that we shall need:

\begin{thm} \label{thm:darvEchi}
Fix $\f,\p \in {\mathcal E}_{\chi}(X,\omega)$. The following properties hold:
\begin{itemize}
\item $d_{\chi}$ is a distance on ${\mathcal H}$ which is uniformly equivalent to $I_{\chi}$;
\item $d_{\chi}$ uniquely extends to ${\mathcal E}_{\chi}(X,\omega)$, 
 $({\mathcal E}_{\chi}(X,\omega),d_{\chi})$ is a complete metric space;
\item $P_{\omega}(\min(\f,\p)) \in {\mathcal E}_{\chi}(X,\omega)$ with
$d_{\chi}(\f, P_{\omega}(\min(\f,\p))) \leq d_{\chi}(\f,\p)$;
\item if $\p \leq \f$, 
 then $d_{\chi}(\f, \p)$ is comparable to  $\int_X \chi \circ (\f- \p) MA(\p)$.
\end{itemize}
\end{thm}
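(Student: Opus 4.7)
The plan is to follow the strategy pioneered by Chen and developed by Darvas for convex weights in \cite{Darv15}, adapted to the concave setting of \cite{Darv21}. I would first work on the smooth locus $\mathcal{H}$, then extend to $\mathcal{E}_{\chi}(X,\omega)$ by density and completion, using the comparability with the pluripotential quasi-distance $I_{\chi}$ as the bridge between the geometric definition of $d_{\chi}$ and concrete integral estimates.

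The crucial first step is to prove $C^{-1} d_{\chi} \le I_{\chi} \le C d_{\chi}$ on $\mathcal{H}$. For this I would use the Chen $C^{1,\bar{1}}$ Mabuchi geodesic $t \mapsto \f_t$ joining $\f,\p \in \mathcal{H}$ and control $|\dot \f_t|$ along the flow. The upper bound on $d_{\chi}$ is obtained by testing with a specific competitor path built from the geodesic and weighted by $\chi$; the lower bound follows from differentiating the weighted length and integrating by parts, comparing to $\int_X \chi(\f-\p) (MA(\f)+MA(\p))$ via the Blocki-type inequalities of Proposition \ref{pro:blocki}. Positivity, symmetry and triangle inequality for $d_{\chi}$ on $\mathcal{H}$ then follow: positivity from the $I_{\chi}$ comparison, triangle inequality from concatenation of Lipschitz paths.

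For the envelope contraction I would exploit the standard orthogonality relation: for bounded $\omega$-psh $\f,\p$ the measure $MA(P_{\omega}(\min(\f,\p)))$ is supported on the contact set $\{P_{\omega}(\min(\f,\p))=\min(\f,\p)\}$, which together with monotonicity of $I_{\chi}$ under replacing one argument by a smaller function still below the other yields $I_{\chi}(\f, P_{\omega}(\min(\f,\p))) \le I_{\chi}(\f,\p)$, and hence the same inequality for $d_{\chi}$ up to the equivalence constants. In the ordered case $\p \le \f$, the affine path $t \mapsto (1-t)\p + t\f$ is not in general admissible, but by using the geodesic and integrating by parts one reduces the length directly to $\int_X \chi(\f-\p) MA(\p)$; this is essentially the same computation that proves the equivalence $d_{\chi} \asymp I_{\chi}$, specialized to the monotone situation where only the endpoint measure $MA(\p)$ appears.

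The hard part will be the extension to $\mathcal{E}_{\chi}(X,\omega)$ with completeness, precisely because for concave $\chi$ the $d_{\chi}$ is not induced by a true Finsler norm and the pointwise maximum operation used to build geodesics in $\mathcal{H}$ may leave the class. I would proceed in three stages: (i) given $\f \in \mathcal{E}_{\chi}$, take a Demailly regularization $\f_k \searrow \f$ by smooth $\omega$-psh functions and verify, via the $d_{\chi}\asymp I_{\chi}$ equivalence and the continuity of $MA$ along decreasing sequences, that $(\f_k)$ is $d_{\chi}$-Cauchy; define $d_{\chi}(\f,\p):=\lim d_{\chi}(\f_k,\p_k)$ and show independence of the approximants by applying the envelope contraction to $P_{\omega}(\min(\f_k,\f_k'))$; (ii) for a $d_{\chi}$-Cauchy sequence $(\f_j)$ in $\mathcal{E}_{\chi}$, derive uniform $\chi$-energy bounds from the quasi-triangle inequality for $I_{\chi}$, which then give $L^1$-precompactness and, via Fatou applied to $\chi$, membership of the limit in $\mathcal{E}_{\chi}$; (iii) identify the $d_{\chi}$-limit with the $L^1$-limit using envelope contraction along $P_{\omega}(\inf_{k\ge j}\f_k)$, which also explains the quasi-monotone flavor of the convergence. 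The technical obstacle throughout is to rule out concentration of the weighted Monge-Amp\`ere mass on pluripolar sets in the limit, which requires a uniform integrability argument based on the monotonicity of $\f \mapsto \mu_{\f}$ and the concavity of $\chi$.
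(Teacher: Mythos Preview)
The paper does not prove this theorem: it is stated as a summary of results from \cite{Darv15} (convex weights) and \cite{Darv21} (concave weights), with only a one-line reference to the relevant propositions and theorems in those papers. So there is nothing in the paper to compare your argument against; you are attempting to reprove Darvas's results, which the authors treat as a black box.

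That said, your sketch has a concrete gap in the third bullet. You propose to obtain the envelope contraction
$d_{\chi}(\f, P_{\omega}(\min(\f,\p))) \leq d_{\chi}(\f,\p)$
by first proving $I_{\chi}(\f, P_{\omega}(\min(\f,\p))) \leq I_{\chi}(\f,\p)$ via orthogonality, and then transferring this through the equivalence $C^{-1} d_{\chi} \leq I_{\chi} \leq C\, d_{\chi}$. This only yields the inequality up to the constant $C^2$, not the sharp inequality stated. In Darvas's actual proofs the contraction is obtained directly at the level of $d_{\chi}$, using the Pythagorean-type identity $d_{\chi}(\f,\p)=d_{\chi}(\f,P_{\omega}(\min(\f,\p)))+d_{\chi}(\p,P_{\omega}(\min(\f,\p)))$ coming from the rooftop envelope and the structure of weak geodesics; orthogonality enters, but the argument is not routed through $I_{\chi}$. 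For the application in Theorem~\ref{thm:energy} the sharp constant matters for the clean telescoping estimate, although a version with a multiplicative constant would still suffice after adjusting the geometric series. Your invocation of Proposition~\ref{pro:blocki} for the lower bound $d_{\chi} \gtrsim I_{\chi}$ is also off: that proposition controls $\int (w-v)^{n+1}\omega_u^n$, which is not the $\chi$-weighted quantity you need for general concave $\chi$; in \cite{Darv21} the comparison goes instead through monotonicity of the geodesic speed $|\dot\f_t|$ and the fundamental estimate for $\chi$-energies along decreasing sequences.
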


We refer   to \cite{Darv15} for convex weights with polynomial growth,
and to \cite[Proposition 5.3, Theorem 5.7, Theorem 6.1]{Darv21} 
for concave weights.

   \section{Quasi-monotone convergence} \label{sec:qm}

\subsection{Capacity vs quasi-monotonicity}
 
 Continuity of complex \MA operators along monotone sequences lies 
 at the heart of Bedford-Taylor theory \cite{BT82}. Several extensions
 of this continuity property have been proposed over the last decades
 under various restricted types of convergence.
 The following notion seems to encompass many of the latter:

  \begin{defi}  \label{def:quasimon}
  A sequence $\f_j \in PSH(X,\omega)$ converges quasi-monotonically 
   to $\f \in PSH(X,\omega)$ if there exists an increasing 
   (resp. decreasing) sequence $u_j $ (resp. $v_j$)
   in $PSH(X,\omega)$  such that 
   $u_j \leq \f_j \leq v_j$ for all $j$ and $u_j,v_j \rightarrow \f$ in $L^1(X)$.
  \end{defi}
  
  It follows easily from the definition that $\f_j$ converges to $\f$ in $L^1(X)$.
Observe that
$\p_j^+:=\sup_{\ell \geq j} \f_{\ell}$ decreases to $\f$, while
$\p_j^-:=\inf_{\ell \geq j} \f_{\ell}$ increases to $\f$.
However none of these functions usually belongs to $PSH(X,\omega)$:
$\p_j^+$ satisfies the mean value inequalities but it is no longer
u.s.c., while $\p_j^-$ is u.s.c. but does not satisfy the mean value inequalities.
It follows from \cite[Proposition 8.4]{GZbook} that
$$
\f_j^+:=\left( \sup_{\ell \geq j} \f_{\ell} \right)^* \in PSH(X,\omega)
$$  
and decreases to $\f$ pointwise.
By duality we consider the sequence
$$
\f_j^-:=P_{\omega} \left( \inf_{\ell \geq j} \f_{\ell} \right).
$$
The latter belongs to $PSH(X,\omega)$ as soon as it is not identically $-\infty$,
which is the case if $\f_j \geq u_j$ with $u_j$ increasing, since we then 
obtain $\f_j^- \geq u_j$. We thus obtain the following reformulation
of the quasi-monotone convergence:

\begin{lem} \label{lem:def}
A sequence $\f_j \in PSH(X,\omega)$ converges quasi-monotonically 
   to $\f \in PSH(X,\omega)$ if and only if
  $
\f_j^-:=P_{\omega} \left( \inf_{\ell \geq j} \f_{\ell} \right)
$
increases to $\f$. 
\end{lem}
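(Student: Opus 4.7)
My plan is to prove the two implications separately. The forward direction reduces to a squeeze argument using that $u_j$ lies below the $\omega$-psh envelope $\f_j^-$, while the reverse direction amounts to exhibiting the two monotone witnesses $u_j,v_j$ explicitly from $\f_j^-$ and a dual upper envelope.

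For $(\Rightarrow)$, I would start from given sequences $u_j\nearrow\f$ and $v_j\searrow\f$ in $PSH(X,\omega)$ with $u_j\le\f_j\le v_j$. Monotonicity of $P_{\omega}$ together with the fact that $j\mapsto\inf_{\ell\ge j}\f_\ell$ is increasing (the index set shrinks) immediately gives that $\f_j^-$ is itself an increasing sequence. The core step is the sandwich $u_j\le\f_j^-\le v_j$. The lower bound follows because $u_j\le u_\ell\le\f_\ell$ for every $\ell\ge j$, so $u_j\le\inf_{\ell\ge j}\f_\ell$; then $u_j\in PSH(X,\omega)$ passes to the envelope to yield $u_j\le P_{\omega}(\inf_{\ell\ge j}\f_\ell)=\f_j^-$, in particular $\f_j^-\not\equiv-\infty$, hence $\f_j^-\in PSH(X,\omega)$. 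The upper bound is symmetric: $\f_\ell\le v_\ell\le v_j$ for $\ell\ge j$ gives $\inf_{\ell\ge j}\f_\ell\le v_j$, and therefore $\f_j^-\le v_j$. The $L^1$-convergence of both $u_j$ and $v_j$ to $\f$ forces $\f_j^-\to\f$ in $L^1$, which combined with monotonicity upgrades to the pointwise a.e.\ increase to $\f$.

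For $(\Leftarrow)$, assume $\f_j^-\in PSH(X,\omega)$ and $\f_j^-\nearrow\f$. I would set $u_j:=\f_j^-$ directly, which is increasing, $\omega$-psh, bounded above by $\f_j$ (because $\f_j^-\le\inf_{\ell\ge j}\f_\ell\le\f_j$), and converges to $\f$ in $L^1$ by monotone convergence. For the decreasing upper witness I would use the dual upper envelope $v_j:=\bigl(\sup_{\ell\ge j}\f_\ell\bigr)^*$, which is decreasing in $j$, dominates $\f_j$, and decreases to $\f$ by \cite[Proposition 8.4]{GZbook}.

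The main technical subtlety sits in this reverse direction: one must ensure that $v_j=\bigl(\sup_{\ell\ge j}\f_\ell\bigr)^*$ is a bona fide $\omega$-psh function and not identically $+\infty$, which is equivalent to requiring the uniform upper bound $\sup_j\sup_X\f_j<+\infty$. This bound does not follow from $\f_j^-\nearrow\f$ alone — for instance the alternating sequence $\f_{2k}\equiv 0$, $\f_{2k+1}\equiv k$ satisfies $\f_j^-\equiv 0$ yet has $\sup_X\f_j\to+\infty$, and no decreasing $v_j$ exists. Hence the reverse direction must be read inside the ambient running hypothesis of the paper that $\f_j\to\f$ in $L^1$; Hartogs-type compactness of normalized subsets of $PSH(X,\omega)$ then supplies the required uniform upper bound and activates \cite[Proposition 8.4]{GZbook} to conclude $v_j\searrow\f$.
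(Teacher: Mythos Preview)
Your proof is correct and follows the paper's own approach, which is sketched in the discussion immediately preceding the lemma (the sandwich $u_j\le\f_j^-\le v_j$ for the forward direction, and $u_j:=\f_j^-$, $v_j:=\f_j^+$ for the reverse). Your final paragraph is in fact sharper than the paper: the lemma as stated omits the ambient $L^1$-hypothesis, and your alternating counterexample shows the reverse implication genuinely requires it.
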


It is a celebrated result of \cite{BT82} 
that  if $\f_j^-$ converges to $\f$ in $L^1$,
the convergence  
 holds pointwise off a pluripolar set.

    \begin{exa} \label{exa:quasilowerbound}
    Assume $\f_j \in PSH(X,\omega)$ converges in $L^1(X)$
 to $\f \in PSH(X,\omega)$, and assume there exists $\f \geq \p \in PSH(X,\omega)$
 and $\e_j \in \R^+$ decreasing to $0$ such that $\f_j \geq (1-\e_j) \f+\e_j\p$.
 Then $\f_j^-\geq (1-\e_j) \f+ \e_j \p$, hence $\f_j$ converges to $\f$ quasi-monotonically.
  In this case it has been observed in \cite[Lemma 1.2]{GLZ19} that 
 $\f_j$ converges to $\f$ in capacity.
This is a special case of  Theorem \ref{thm:qmvscap} below.
  \end{exa}

  It is well-known that monotone convergence implies convergence in capacity
  (see \cite[Proposition 4.25]{GZbook}).
  We extend this here to quasi-monotone convergence.
  
  \begin{thm} \label{thm:qmvscap}
  If a sequence $\f_j \in PSH(X,\omega)$ converges quasi-monotonically, then
  it converges in capacity.
  
  Conversely if $\f_j \in PSH(X,\omega)$ is uniformly bounded and converges in capacity,
  then a subsequence converges quasi-monotonically.
  \end{thm}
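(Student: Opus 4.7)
For the first (direct) implication, I unpack the definition: quasi-monotonicity provides $\omega$-psh sequences $u_j \nearrow \f$ and $v_j \searrow \f$ with $u_j \leq \f_j \leq v_j$. Monotone convergence of $\omega$-psh functions implies convergence in capacity (\cite[Proposition 4.25]{GZbook}), so $u_j, v_j \to \f$ in capacity. Since also $u_j \leq \f \leq v_j$, we have the pointwise bound $|\f_j-\f| \leq (v_j-\f)+(\f-u_j)$, hence
\begin{equation*}
\{|\f_j-\f|>\delta\} \subset \{v_j-\f>\delta/2\} \cup \{\f-u_j>\delta/2\},
\end{equation*}
and the right-hand side has $\capo$ tending to zero by subadditivity.

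For the converse, I normalize $-M \leq \f_j \leq 0$ (so $-M \leq \f \leq 0$ as well) and extract a subsequence such that $\capo(E_j)<2^{-j}$, where $E_j:=\{|\f_j-\f|>2^{-j}\}$. I would take
\begin{equation*}
v_j := \bigl(\sup_{\ell \geq j}\f_\ell\bigr)^*, \qquad u_j := P_\omega\bigl(\inf_{\ell \geq j}\f_\ell\bigr).
\end{equation*}
The first is $\omega$-psh, decreasing, and converges to $\f$ by \cite[Proposition 8.4]{GZbook} (using that convergence in capacity implies $L^1$-convergence). The second is $\omega$-psh since $\inf_{\ell \geq j}\f_\ell \geq -M$ guarantees $u_j \not\equiv -\infty$; it is increasing in $j$ and satisfies $u_j \leq \f_j$.

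The main obstacle is to show that $u_\infty := \lim u_j$ equals $\f$. The inequality $u_\infty \leq \f$ follows at once from $u_j \leq \f_j$ and the $L^1$-convergence of $\f_j$ to $\f$. For the reverse, Borel--Cantelli applied to $\capo(E_j)<2^{-j}$ yields $\capo(G_j^c)<2^{-j+1}$ for $G_j:=\bigcap_{\ell \geq j}(X\setminus E_\ell)$, and on $G_j$ we have the pointwise lower bound $\inf_{\ell \geq j}\f_\ell \geq \f - 2^{-j}$. I would argue by contradiction: if $\capo(\{u_\infty<\f-\eta\})>0$ for some $\eta>0$, then for $j$ large its intersection with $G_j$ still has positive capacity, and there $\inf_{\ell \geq j}\f_\ell > u_\infty + \eta/2$. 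A careful interpolation between $\f$ and the relative extremal function of $G_j^c$ (whose $L^1$-norm tends to zero as $\capo(G_j^c)\to 0$) would then yield an $\omega$-psh minorant of $\inf_{\ell \geq j}\f_\ell$ that exceeds $u_\infty$ on that set, contradicting $u_j \leq u_\infty$. The delicate point of this comparison is that a naive convex combination $(1-t)\f + t V_{G_j^c}$ produces an irreducible constant shift of order $M$, which must be absorbed by using the sharp $L^1$-decay of the extremal function rather than by a direct linear interpolation; this is the technical heart of the converse, and the sole input coming from the uniform boundedness hypothesis.
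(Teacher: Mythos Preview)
Your proof of the direct implication is correct and in fact cleaner than the paper's. The paper reduces to the uniformly bounded case and then applies the Blocki-type estimate (Proposition~\ref{pro:blocki}) to control $\capo(\{|\f_j-\f|\geq\delta\})$ by mixed integrals of $\f_j^+-\f_j^-$. You bypass this entirely: once $u_j\leq\f_j,\f\leq v_j$, the inclusion $\{|\f_j-\f|>\delta\}\subset\{v_j-\f>\delta/2\}\cup\{\f-u_j>\delta/2\}$ together with the known fact that monotone sequences converge in capacity finishes the job. No reduction to bounded functions, no integral inequality.

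For the converse, however, there is a genuine gap. You correctly isolate the issue: you need an $\omega$-psh minorant of $\inf_{\ell\geq j}\f_\ell$ that is close to $\f$, and on the bad set $G_j^c$ this minorant must reach down to $-M$. You also correctly note that the naive candidate $(1-t)\f+tV_{G_j^c}-c$ fails, since $V_{G_j^c}\in[-1,0]$ forces $t+c\gtrsim M$, which cannot go to zero. But your proposed remedy, ``absorb the shift via the sharp $L^1$-decay of $V_{G_j^c}$'', does not work: the $L^1$-decay $\int(-V_{G_j^c})\,\omega^n\leq\capo(G_j^c)$ controls the \emph{average} of $V_{G_j^c}$, not its range, and it is the range that obstructs the minorant condition on $G_j^c$.

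The missing idea, which the paper supplies, is to let the height of the extremal function grow: take $h_j\in\PSH(X,\omega)$ with $-A_j\leq h_j\leq 0$ and $h_j=-A_j$ on the bad set, where $A_j\to\infty$. The crucial estimate is $\int_X(-h_j)\,\omega^n\leq A_j^{n+1}\capo(G_j^c)$, so if one balances $A_j^{n+1}\capo(G_j^c)\leq 1$ then the $h_j$ have bounded $L^1$-norm and can be combined (the paper sums $\sum 2^{-\ell}h_\ell$). Now a convex combination with coefficient $t_j\sim M/A_j\to 0$ \emph{does} reach $-M$ on the bad set while staying $\omega$-psh and converging to $\f$ in $L^1$. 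Once you have this, your envelope $u_j=P_\omega(\inf_{\ell\geq j}\f_\ell)$ dominates such a minorant and hence converges to $\f$, as you wanted. (The paper organizes this slightly differently, comparing $\f_{j+1}$ to $\f_j$ rather than to $\f$ and building an explicitly increasing sequence $\psi_j$, but the heart of the matter is the same height--capacity balance.)
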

  
  The converse does not hold without a uniform lower bound on the $\f_j$'s, 
  as shown in Example \ref{exa:infty}
  which provides a sequence $(\f_j)$ which converges in capacity 
  while $\f_j^- \equiv -\infty$ (even after extracting).
  Example \ref{exa:extraction} moreover shows that it is usually necessary to extract,
  in order to reach the quasi-monotone convergence.

   \begin{proof}
   We let the reader check that 
   if $\f_j$ converges quasi-monotonically, then for all $C>0$,
   $\max(\f_j,-C)$ converges quasi-monotonically
   to $\max(\f,-C)$. Since the convergence in capacity 
   of $\f_j$ to $\f$ is equivalent to the 
   convergence in capacity of 
   $\max(\f_j,-C)$   to $\max(\f,-C)$,
   we are reduced to the uniformly bounded case.
   
   \smallskip
   
Assume first that $\f_j$ converges quasi-monotonically to $\f \in PSH(X,\omega)$.
Fix $u \in PSH(X,\omega)$ such that $-1 \leq u \leq 0$.
 It follows from  
 Proposition \ref{pro:blocki} that for all $\delta>0$,
 \begin{eqnarray*}
 \lefteqn{
  \int_{\{ |\f_j-\f| \geq \delta \}} (\omega+dd^c u)^n
 \leq  \delta^{-(n+1)} \int_X (\f_j^+-\f_j^-)^{n+1}(\omega+dd^c u)^n} \\
 & \leq & \delta^{-(n+1)} (n+1)! \sum_{j=0}^n \int_X (\f_j^+-\f_j^-)^{n+1-j}
 (\omega+dd^c \f_j^-)^j \wedge \omega^{n-j}.
 \end{eqnarray*}
 Each of the above integral converges to zero as $j \rightarrow +\infty$,
 as follows from Bedford-Taylor monotone convergence theorem. Thus 
 $\f_j$ converges in capacity.
 
 \smallskip
 
 Assume now that $\f_j$ converges in capacity. 
 Rescaling $\omega$, we can assume that $-1 \leq \f_j \leq 0$.
 Let $\delta_j,\e_j>0$ be sequences decreasing to zero.
 Observe that 
 $$
 \{ |\f_j-\f_{j+1}| \geq \delta \} \subset 
 \{ |\f_j-\f| \geq \delta/2 \} \cup \{ |\f_{j+1}-\f| \geq \delta/2 \}.
 $$
 Extracting and relabelling, we can thus assume that 
 $$
 {\rm Cap}_{\omega}\left( \{ |\f_j-\f_{j+1}| \geq \delta_j \} \right) \leq \e_j-\e_{j+1}.
 $$
 
 Set $E_j=\{ \f_{j+1} \leq \f_j-\delta_j \}$ and $F_j=\cup_{\ell \geq j} E_{\ell}$.
 The sequence $j \mapsto F_j$ is decreasing with $ {\rm Cap}_{\omega}(F_j) \leq \e_j$. 
 Fix $A_j \geq 1$ and consider
 $$
 h_j:=\left( \sup \left\{ u \in PSH(X,\omega), \, u \leq -A_j \text{ on } F_j
 \text{ and } u \leq 0 \text{ on } X \right\} \right)^*.
 $$
 This is a variant of the "relative extremal function" considered in \cite[Definition 9.13]{GZbook}.
 Adapting \cite[Section 9.3]{GZbook}, one easily obtains the following:
 \begin{itemize}
 \item $h_j \in PSH(X,\omega)$ with $-A_j \leq h_j \leq 0$;
 \item $h_j=-A_j$ on $F_j \setminus P_j$, where $P_j$ is a pluripolar set;
 \item ${\rm Cap}_{\omega}(F_j) \geq A_j^{-n-1} \int_X (-h_j) (\omega+dd^c h_j)^n$.
 \end{itemize}
 Thus $\int_X (-h_j) (\omega+dd^c h_j)^n \leq 1$ if we choose $A_j^{n+1} \e_j=1$ and
 it follows from Stokes theorem that
 $$
 \int_X (-h_j) \omega^n \leq \int_X (-h_j) (\omega+dd^c h_j)^n \leq 1.
 $$
We infer that  $\sum_{\ell \geq 1} 2^{-\ell} h_{\ell} \in PSH(X,\omega)$, hence the sequence
$$
H_j:=\sum_{\ell \geq j} 2^{-\ell-1} h_{\ell} \in PSH(X,2^{-j}\omega)
$$
increases to zero as $j \rightarrow +\infty$.

We set $\p_j:=(1-2^{-j}) \f_j+H_j-2^{-j+1} \in PSH(X,\omega)$.
Since $H_j \leq 0$ and $\f_j \geq -1$, we obtain $\p_j \leq \f_j$.
We choose $\delta_j=2^{-j-1}/(1-2^{-j})$ and $A_j=2^{j+1}$
 and claim that   $j \mapsto \p_j$ increases to $\f$ as $j \rightarrow +\infty$.
Indeed
\begin{itemize}
\item either $x \in X \setminus F_j$, then $\f_{j+1}(x) \geq \f_j(x)-\delta_j$ hence
$$
\p_{j+1}-\p_j=(1-2^{-j})(\f_{j+1}-\f_j)+2^{-j-1}\f_{j+1}-2^{-j-1}h_j +2^{-j} \geq 0
$$
using that $-h_j \geq 0$ and $\f_{j+1} \geq -1$;
\item or $x \in F_j$ and we obtain
$\p_{j+1}-\p_j \geq -1-2^{-j-1}h_j=0$ if $x \notin P_j$.
\end{itemize}
Thus $\p_{j+1} \geq \p_j$ a.e. hence everywhere.
Replacing $\p_j$ by $\max(\p_j,-1)$, we moreover
obtain an increasing sequence which is uniformly bounded.
  \end{proof}

    \subsection{Finite energy sequences}
    
  Following Theorem \ref{thm:qmvscap} we now prove our main Theorem,
  which extends several partial  results previously obtained 
  (\cite[Theorem 2]{Xing96}, \cite[Theorem 2.1]{H08},
    \cite[Theorem 1.2]{Xing09},\cite[Theorem 5.7]{BBGZ13},
    \cite[Corollary 5.7]{Darv15}, \cite[Propositions 2.6 and 6.4]{BDL17}, \cite[Proposition 1.9]{GLZ19},
    \cite[Proposition 5.7]{Tr22}, \cite[Theorem 1.2]{Gup22}).

     \begin{thm} \label{thm:energy}
    Assume $\f_j \in PSH(X,\omega)$  converges in $L^1(X)$
 and fix   $\chi \in {\mathcal W}$ a weight which is either convex
 or concave with polynomial growth at $-\infty$.
   \begin{itemize}   
   \item If $\f_j$ converges 
   in capacity and $\f_j \geq \p$ for  
   some $\p \in {\mathcal E}_{\chi}(X,\omega)$, 
   then $\f_j$ converges  in $({\mathcal E}_{\chi}(X,\omega),d_{\chi})$.
   \item If $\f_j$ converges  in $({\mathcal E}_{\chi}(X,\omega),d_{\chi})$, then 
   $\f_j$ converges in capacity. Moreover
   up to extracting and relabelling,
   $\f_j$ converges quasi-monotonically and there exists 
   $\p \in {\mathcal E}_{\chi}(X,\omega)$ such that    $\f_j \geq \p$.   
   \end{itemize}
   \end{thm}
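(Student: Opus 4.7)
The plan is to treat each bullet separately, combining Theorem \ref{thm:qmvscap} (capacity versus quasi-monotonicity) with Darvas's envelope estimate of Theorem \ref{thm:darvEchi}.

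\textbf{First bullet.} By Theorem \ref{thm:darvEchi}, $d_\chi$-convergence is equivalent to $I_\chi(\f_j,\f) \to 0$. Since $\f \geq \p$ by $L^1$-lower semicontinuity, $\f \in \cEc$. The plan is to truncate: for $C>0$ set $\f_j^C := \max(\f_j,-C)$ and $\f^C := \max(\f,-C)$; these are uniformly bounded and still converge in capacity, so Proposition \ref{pro:cvcap} yields
\begin{equation*}
\int_X |\chi(\f_j^C - \f^C)| \bigl(MA(\f_j^C) + MA(\f^C)\bigr) \lra 0.
\end{equation*}
What remains is a tail estimate: on $\{\f_j \leq -C\} \cup \{\f \leq -C\}$ we have $\p \leq -C$, and the polynomial-growth or concavity hypothesis on $\chi$ allows one to dominate $|\chi(\f_j-\f)|$ by a multiple of $1+|\chi(\p)|$. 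A comparison argument using $\p \leq \f_j$ and $\p \in \cEc$ then bounds $\int_{\{\p \leq -C\}} (1+|\chi(\p)|)\bigl(MA(\f_j)+MA(\f)\bigr)$ uniformly in $j$, with vanishing limit as $C \to \infty$.

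\textbf{Second bullet.} $d_\chi$-convergence implies $L^1$-convergence, so by \cite[Proposition 8.4]{GZbook} the upper regularized suprema $\f_j^+ = (\sup_{\ell \geq j}\f_\ell)^* \searrow \f$. For the dual direction, extract (and relabel) a subsequence with $d_\chi(\f_{j+1},\f_j)<2^{-j}$, and iteratively define $w^j_0 := \f_j$, $w^j_{k+1} := P_\omega(\min(w^j_k,\f_{j+k+1}))$. By the envelope inequality in Theorem \ref{thm:darvEchi},
\begin{equation*}
d_\chi(w^j_{k+1}, w^j_k) \leq d_\chi(w^j_k, \f_{j+k+1}) \leq \sum_{i=0}^{k} 2^{-(j+i)},
\end{equation*}
so $(w^j_k)_k$ is $d_\chi$-Cauchy; by completeness its limit lies in $\cEc$ and must agree with the pointwise decreasing limit $u_j := P_\omega(\inf_{\ell \geq j}\f_\ell)$. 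Thus $u_j \in \cEc$ and $d_\chi(u_j,\f_j) \leq 2^{-j+1}$, so $d_\chi(u_j,\f) \to 0$. The sequence $u_j$ is increasing in $j$ (the infima are taken over shrinking tails), so $u_j \nearrow \f$; combined with $\f_j^+ \searrow \f$, this is quasi-monotone convergence by Lemma \ref{lem:def}, with lower bound $\tilde\p := u_1 \in \cEc$. Capacity convergence of the original $d_\chi$-convergent sequence then follows from a standard sub-subsequence argument: every subsequence of $(\f_j)$ still converges in $d_\chi$, hence admits via the above construction a quasi-monotone sub-subsequence, which converges in capacity by Theorem \ref{thm:qmvscap}.

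The main obstacle I expect is the tail estimate in the first bullet: turning the fixed lower bound $\f_j \geq \p$ into uniform control of weighted integrals against $MA(\f_j)$ on $\{\p \leq -C\}$. The standard route is a comparison-principle estimate relating $MA(\f_j)$ to $MA(\p)$ on low sublevel sets, combined with the growth hypothesis on $\chi$ to reduce the untruncated integrand to one which is uniformly integrable against these Monge-Amp\`ere masses.
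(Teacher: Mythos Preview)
Your proposal follows essentially the same route as the paper's proof: truncation plus Proposition~\ref{pro:cvcap} for the bounded case in the first bullet, and a fast-converging subsequence combined with Darvas's envelope inequality (Theorem~\ref{thm:darvEchi}) in the second bullet, with the sub-subsequence argument to deduce convergence in capacity. Two points deserve attention.

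In the second bullet, your displayed estimate
\[
d_\chi(w^j_{k+1}, w^j_k) \;\leq\; d_\chi(w^j_k,\f_{j+k+1}) \;\leq\; \sum_{i=0}^{k} 2^{-(j+i)}
\]
does \emph{not} establish that $(w^j_k)_k$ is $d_\chi$-Cauchy: the right-hand side is bounded by $2^{-j+1}$ but does not decay in $k$, so the consecutive differences are not summable. The paper avoids this by bounding $d_\chi(\f_j, w^j_k)$ directly (iterating the envelope inequality from the $\f_j$ side rather than between successive $w^j_k$), obtaining $d_\chi(\f_j, w^j_k)\leq 2^{-j+1}$ uniformly in $k$. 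Since $w^j_k$ is \emph{decreasing} in $k$ with uniformly bounded $\chi$-energy, the pointwise limit $u_j$ lies in $\mathcal{E}_\chi(X,\omega)$ and $w^j_k \to u_j$ in $d_\chi$; no Cauchy argument is needed. Your conclusion $d_\chi(\f_j,u_j)\lesssim 2^{-j}$ then follows.

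For the first bullet, the paper handles your flagged tail estimate via the triangle inequality $d_\chi(\f_j,\f) \leq d_\chi(\f_j,\f_j^C) + d_\chi(\f_j^C,\f^C) + d_\chi(\f^C,\f)$ and controls $d_\chi(\f_j,\f_j^C)$ by introducing an auxiliary weight $\tilde\chi$ with $\tilde\chi/\chi \to \infty$ and $\p \in \mathcal{E}_{\tilde\chi}(X,\omega)$ (such a $\tilde\chi$ always exists, cf.\ \cite[Exercise~10.5]{GZbook}), combined with the energy comparison \cite[Lemmas~2.3, 3.5]{GZ07}. This is precisely the ``uniform integrability'' you invoke; your sketch is correct in spirit, and the $\tilde\chi$ trick is the concrete mechanism that makes it work.
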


       For sequences that are not uniformly bounded, one cannot expect that 
    quasi-monotone convergence is equivalent to convergence in energy.
    For instance if $\p \in PSH(X,\omega)$ has some positive Lelong number and 
    $\e_j$ decreases to zero, then 
       $\f_j=\e_j \p$ converges quasi-monotonically to $\f=0$, but
         not in energy. 
 A finite energy lower bound turns out to be a necessary and sufficient condition.

    \begin{proof}
   We start with the first item.
   Assume first that the sequence $(\f_j)$ is uniformly bounded.
   When $\chi(t)=t$, it follows from \cite[Proposition 2.3]{BBEGZ} that 
    $\f_j \rightarrow \f$ in $({\mathcal E}^1(X,\omega),d_1)$ if and only if
    $\int_X (\f_j-\f) [MA(\f)-MA(\f_j)] \rightarrow 0$.
    The latter holds when $\f_j$ converges in capacity, as follows from
    Proposition \ref{pro:cvcap}.
 More generally the convergence in $({\mathcal E}_{\chi}(X,\omega),d_{\chi})$
 is equivalent to the following
 $$
 \int_X \left| \chi (\f_j-\f) \right| \left[ MA(\f_j)+MA(\f) \right] \rightarrow 0,
 $$
 which is a consequence of \cite[Theorem 4.26]{GZbook}.
   
    We now reduce the general case to the uniformly bounded one.
   Fix  $ \tilde{\chi}$ a weight such that 
    $\int_X (-\tilde{\chi} \circ \p) \, MA(\p) <+\infty$ and 
     $\tilde{\chi}(t)/\chi(t) \rightarrow +\infty$ as $t \rightarrow -\infty$
     (see \cite[Exercise 10.5]{GZbook}).
    Set $\f_j^C=\max(\f_j,-C)$ and $\f^C=\max(\f,-C)$,
 it follows from  Theorem \ref{thm:darvEchi}   that
  \begin{eqnarray*}
  d_{\chi}(\f_j,\f_j^C) &\sim &\int_X \chi \circ (\f_j^C -\f_j) \, MA(\f_j) 
   \leq   \int_{\{ \f_j <-C\}} (-\chi \circ \f_j) \, MA(\f_j) \\
   &\leq & \frac{\chi(-C)}{\tilde{\chi}(-C)} \int_X (-\tilde{\chi} \circ \f_j) \, MA(\f_j) 
   \leq  \frac{M^n \chi(-C)}{\tilde{\chi}(-C)} \int_X (-\tilde{\chi} \circ \p) \, MA(\p) ,
  \end{eqnarray*}
  where the last inequality follows from \cite[Lemmas 2.3 and 3.5]{GZ07}.
       The conclusion thus follows from the
   uniformly bounded case and the triangle inequality.
  
  \smallskip
  
  It thus remains to show the second item.
  To check that $\f_j$ converge in capacity, it suffices to show
  that any subsequence admits a subsubsequence that converges in capacity.
  Extracting and relabelling, we can assume that 
  $d_{\chi}(\f_j,\f_{j+1}) \leq 2^{-j}$.
  Set $\f_{j,k}^-:=P_{\omega}(\min_{j \leq \ell \leq j+k} \f_{\ell})$.
  A repeated use of  Theorem \ref{thm:darvEchi} ensures that 
  $\f_{j,k}^- \in {\mathcal E}_{\chi}(X,\omega)$ with
  \begin{eqnarray*}
   d_{\chi}(\f_j,\f_{j,k}^-) &\leq &
  d_{\chi} \left(\f_j, P_{\omega} \left(\min_{j+1 \leq \ell \leq j+k} \f_{\ell} \right) \right) \\
  &\leq & d_{\chi}(\f_j,\f_{j+1})+
 d_{\chi} \left(\f_{j+1}, P_{\omega} \left(\min_{j+1 \leq \ell \leq j+k} \f_{\ell} \right) \right) \\
 &\leq & d_{\chi}(\f_j,\f_{j+1})+
 d_{\chi} \left(\f_{j+1}, P_{\omega} \left(\min_{j+2 \leq \ell \leq j+k} \f_{\ell} \right) \right)  \\
 &\leq & \sum_{\ell=j}^{j+k-1} d_{\chi}(\f_{\ell},\f_{\ell+1})
 \leq 2^{-j+1}.
   \end{eqnarray*}
     We infer that $k \mapsto \f_{j,k}^-$ decreases, as $k$ increases to $+\infty$,
   to $\f_j^- \in {\mathcal E}_{\chi}(X,\omega)$ with 
   $ d_{\chi}(\f_j,\f_{j}^-) \leq 2^{-j+1}$. 
   It follows that $\f_{j}$ converges to $\f$
   quasi-monotonically and for all $j$, $\f_j \geq \p:=\f_1^- \in {\mathcal E}_{\chi}(X,\omega)$.
   In particular $\f_j$ converges to $\f$ in capacity.
    \end{proof}

    \section{Examples and remarks} \label{sec:exa}
    
    \subsection{Quasi-monotone convergence of Monge-Amp\`ere potentials}
    
    Families of solutions to complex Monge-Amp\`ere equations or flows provide
    natural examples of sequences which converge quasi-monotonically.
    We illustrate this here with two typical situations.
    
    \begin{exa} \label{exa:solutions}
    Let $\mu$ be a non pluripolar probability measure.
    It follows from \cite{GZ07,Din09} that there exists a unique
    function $\f \in {\mathcal E}(X,\omega)$ such that
    $$
    MA(\f)=e^{\f} \mu.
    $$
    \cite[Theorem C]{GLZ19} and Choquet's lemma
    show that $\f$ is the quasi-monotone limit of a sequence of functions
    $\f_j \in {\mathcal E}(X,\omega)$ such that
    $MA(\f_j) \leq e^{\f_j} \mu$.
    \end{exa}

    We now consider smoothing properties of the K\"ahler-Ricci flow 
    \cite{GZ17,DL17}.
    
      \begin{exa} \label{exa:fkr}
      Assume that $X$ is a Calabi-Yau manifold and fix
    $T_0=\omega+dd^c \f_0$  a positive closed current 
  with zero Lelong numbers which is cohomologous to $\omega$.
      It has been shown in \cite{GZ17,DL17} that
      there exists a unique family of K\"ahler forms
      $(\omega_t)_{t>0}$ on $X$, which evolve along the K\"ahler-Ricci flow
  $$
  \frac{\partial \omega_t}{\partial t}=-{\rm Ric}(\omega_t),
  $$
 and such that $\omega_t \rightarrow T_0$ as $t \rightarrow 0$. 
 The (normalized) potentials $\f_t \in PSH(X,\omega)$ of $\omega_t=\omega+dd^c \f_t$
 are solutions of a complex \MA flow,
 $$
 (\omega+dd^c \f_t)^n=e^{\partial_t \f_t+h} \omega^n,
 $$
 and the convergence $\f_t \rightarrow \f_0$ at time zero 
 is such that $\f_t \geq \f_0 -A(t-t\log t)$ for some constant $A \geq 0$
 \cite[Lemma 2.9]{GZ17}.
 It follows that
 $$
 \f_t^-:=P_{\omega} \left( \inf_{0 < s \leq t} \f_s \right) \geq \f_0 -A(t-t\log t),
 $$
 hence the convergence of $\f_t$ towards $\f_0$ is quasi-monotone.
    \end{exa}

     \subsection{Intermediate convergences}

  We first provide  examples   of $\omega$-psh functions
  $\f_j$ such that $\f_j^-$ is identically $-\infty$, while 
  $\f_j$ converges in capacity to some $\omega$-psh function $\f$.

\begin{exa} \label{exa:infty}
Consider the Riemann sphere endowed with the Fubini-Study K\"ahler form $(X,\omega)=(\PP^1,\omega_{FS})$. Then
$$
\f_j[z]=\log|z_1-\tau_j z_0|-\log |z| \longrightarrow \f[z]=\log|z_1|-\log |z|
$$
if the $\tau_j$'s converge to $0$,
and $P_{\omega}(\min(\f_j,\f_{j+1})) \equiv -\infty$ because 
functions in $PSH(\PP^1,\omega_{FS})$ can have at most one Lelong number of size $1$.
On the other hand the convergence of $\f_j$ towards $\f$ is uniform on compact subsets of
$\PP^1 \setminus [1:0]$, so the sequence converges in $L^1$ and in capacity.

More generally if $\p \in PSH(\PP^n,\omega_{FS})$ the set 
$E_1(\p)=\{ x \in \PP^n , \; \nu(\p,x) \geq 1 \}$ has to be included in
a hyperplane \cite[Proposition 2.2]{CG09}, so one can cook up similar
 examples for which $\f_j^-$ is identically $-\infty$.
\end{exa}

We now provide an example of a uniformly bounded sequence $(\f_j)$
of $\omega$-psh functions which converge to $0$ in capacity with $\f_j^- \equiv -1$.
This shows that it is necessary to use extractions in Theorem \ref{thm:energy}.

\begin{exa} \label{exa:extraction}
Using local charts, we construct for each point $a \in X$ a function
$G_a \in PSH(X,\omega)$ which has a logarithmic singularity at point $a$.

Observe that $\max(G_a,-1) \equiv -1$ in a neighborhood $V_a$ of $a$.
We cover $X$ by finitely many such neighborhoods $V^1_{a_{1,1}},\ldots V^1_{a_{s_1,1}}$.
We similarly cover $X$ by finitely many neighborhoods $V^2_{a_{1,2}},\ldots,V^2_{a_{s_2,2}}$
on which the function $\max(G_a/2,-1)$ is identically $-1$.
The latter neighborhoods are smaller and we need more points.

We go on by induction, dividing at each step $G_a$ by an extra factor $2$, considering the family
of functions $\max(G_a/2^n,-1)$ at step $n$. We then label the corresponding sequence of functions
$(\f_j)$, so that 
$\f_j=\max(G_a/2^n,-1) \in PSH(X,\omega)$ for some $a_j=a_{l,n}$ with $1 \leq \ell \leq s_n$
and $n=n_j \rightarrow +\infty$ as $j \rightarrow +\infty$.
By construction we obtain $\f_j \geq -1$ and $\f_j^- \equiv -1$, since
$\f_j^-$ lies below each function from a fixed step.
On the other hand
$G_{a_j}/2^{n_j} \rightarrow 0$ in capacity, hence $\f_j \rightarrow 0$ in capacity.
\end{exa}

It has been observed by Cegrell in \cite{Ceg83}  that the \MA operator is not 
 continuous for the $L^1_{loc}$-topology.
  The following is an explicit example of this phenomenon, adapted to the compact context.
   
   \begin{exa} \label{exa:pascap}
   Assume $(X,\omega)=(\PP^2,\omega_{FS})$ and
consider 
$$
\f_j[z]=\frac{1}{j}\max \left( \log \left| z_0^j+z_1^j+z_2^j \right| , \log |z_0| \right)-\log |z|.
$$
Observe that $\f_j \in PSH(X,\omega)$ is locally bounded outside the finite set
$$
F_j=\{ [z] \in \PP^2, \, z_0=0
\; \& \; z_{1}^j+z_2^j=0 \} 
$$ 
which is included in the circle $S^1=\{z_0=0 \; \& \; |z_1|=|z_2| \}$.
Note also that $\f_j$ converges in $L^1$ to
$
\f[z]=\max_{0 \leq j \leq 2} \log^+|z_j|-\log|z|.
$
The \MA measures $MA(\f_j)$ are 
combination of Dirac masses at points of $F_j$ and
converge to the Haar measure on $S^1$, while $MA(\f)$ is the Haar measure on the torus
$$
{\mathbb T}^2=\{ [z] \in \PP^2, \; |z_0|=|z_1|=|z_2| \}.
$$
Thus $\f_j$ does not converge in capacity to $\f$.
\end{exa}

We finally compare the various types of convergence in energy classes.

  \begin{exa} \label{exa:energy}
Assume $(X,\omega)=(\PP^1,\omega_{FS})$ and consider
$$
\f_j[z]=\e_j \max (\log|z_1|-\log|z|,-C_j) \in PSH(X,\omega),
$$
 where $0 \leq \e_j \leq 1$ and $C_j \geq 0$. These examples are toric, so one can use the dictionary established in \cite{CGSZ19} to justify the following assertions:
 \begin{itemize}
 \item $\f_j \rightarrow 0$ in $L^1$ iff it does so in capacity/quasi-monotonically
  iff $\e_j \rightarrow 0$;
 \item $\f_j \rightarrow 0$ in  $({\mathcal E}_{\chi}(X,\omega),d_{\chi})$ 
 iff $\e_j \chi(-\e_jC_j) \rightarrow 0$.
 \end{itemize}
 By considering weights  with arbitrarily slow growth, we conclude that 
 $\f_j \rightarrow 0$  in some $({\mathcal E}_{\chi}(X,\omega),d_{\chi})$ 
 as soon as $\e_j \rightarrow 0$,
 whatever the speed at which $C_j \nearrow +\infty$.
\end{exa}

  \subsection{Concluding remarks}
  

  \subsubsection{Independence on $\omega$}
  
 Let $\tilde{\omega}=\omega+dd^c \rho$ be a K\"ahler form cohomologous to $\omega$.
 Then $\f_j \in PSH(X,\omega)$ if and only if $\p_j=\f_j-\rho \in PSH(X,\tilde{\omega})$.
 We let the reader check that $\f_j$ converges quasi-monotonically if and only if 
 so does $\p_j$ (and similarly for the other notions of strong convergence).

Assume now $\tilde{\omega}$ is an arbitrary K\"ahler form. We want to compare notions of strong convergence with respect to $\omega$ and with respect to $\tilde{\omega}$. We claim that
these are essentially the same. Using $\omega+\tilde{\omega}$ as a third auxiliary form, we see that
it suffices to treat the case when $\omega \leq \tilde{\omega}$.
The following are left to the reader:
\begin{itemize}
\item $PSH(X,\omega) \subset PSH(X ,\tilde{\omega})$ and
if $\f \in {\mathcal E}_{\chi}(X,\omega)$ then $\f \in {\mathcal E}_{\chi}(X,\tilde{\omega})$;
\item if  $\f \in {\mathcal E}_{\chi}(X,\tilde{\omega})$ 
then $P_{\omega}(\f) \in {\mathcal E}_{\chi}(X,\omega)$;
\item $\f_j \in PSH(X,\omega)$ converges   to $\f \in PSH(X,\omega)$ with respect to $Cap_{\omega}$
if and only if it does so with respect to $Cap_{ \tilde{\omega}}$;
\item if $\f_j \in PSH(X,\omega)$ converges quasi-monotonically to $\f \in PSH(X,\omega)$,
then the same property holds with respect to  $\tilde{\omega}$.
\end{itemize}

Adapting Example \ref{exa:infty}, one can find
$(\f_j) \in PSH(X,\tilde{\omega})^{\N}$ 
which converges quasi-monotonically to $\f \in PSH(X,\tilde{\omega})$
with $P_{\omega}(\inf_{\ell \geq j} \f_{\ell}) \equiv -\infty$. This converse however holds
if we assume an appropriate lower bound on the sequence.

In particular a uniformly bounded sequence  $\f_j \in PSH(X,\omega) \cap PSH(X,\tilde{\omega})$
 converges quasi-monotonically w.r.t. $\omega$ if and only if it does so 
 w.r.t. $\tilde{\omega}$.

  \subsubsection{Big classes and prescribed singularities}
  
  All notions introduced previously and all properties established so far
  can be adapated to the case when the reference form $\omega$ is no longer
  K\"ahler, but merely a smooth closed real $(1,1)$-form representing a {\it big cohomology class}.
  We refer the reader to \cite{BEGZ10} for basics of pluripotential theory in that context.
    One can also extend these results to the case of big classes with prescribed singularities,
  a theory that has been developed by Darvas-DiNezza-Lu in \cite{DDL18,DDL19}
  and further studied in \cite{Tr20,Tr22}.

   \subsubsection{The local setting}
   
   Let $\Omega$ be a pseudoconvex domain of $\C^n$.
   We let $PSH(\Omega)$ denote the set of \psh functions in $\Omega$.
   
   \begin{defi}
 A sequence $(\f_j)$   of \psh functions in $\Omega$ 
  converges to $\f \in PSH(\Omega)$ quasi-monotonically if
 $
\f_j^-:=P_{\Omega} \left( \inf_{\ell \geq j} \f_{\ell} \right) 
 $
 increases to $\f$.
   \end{defi}
   
   Here $P_{\Omega}(h)$ denotes the largest \psh function in $\Omega$ lying below $h$.
   Adapting what we have done in the compact case, we can  establish that
   \begin{itemize}
   \item quasi-monotone convergence implies convergence in capacity;
   \item a sequence can converge in capacity but not quasi-monotonically;
   \item both notions essentially  coincide for uniformly bounded sequences.
   \end{itemize}
We leave the details to the reader.


\begin{thebibliography}{99}


   
  
  \bibitem[BT82]{BT82} E.~Bedford, B.~A. Taylor, 
\emph{A new capacity for plurisubharmonic functions}.
Acta Math. \textbf{149} (1982), no.~1-2, 1--40.  
  
 

\bibitem[BBEGZ19]{BBEGZ} R.J.Berman, S.Boucksom, P.Eyssidieux, V.Guedj, A.Zeriahi,
  \emph{K\"{a}hler-{E}instein metrics and {K}\"{a}hler-{R}icci flow on log  {F}ano varieties}, 
  J. Reine Ang. Math. \textbf{751} (2019), 27--89.
   

\bibitem[BBGZ13]{BBGZ13}
R.~J. Berman, S.~Boucksom, V.~Guedj, A.~Zeriahi,
 \emph{A variational approach to complex {M}onge-{A}mp\`ere equations}, 
  Pub.Math. I.H.E.S. \textbf{117} (2013), 179--245.  
  
  \bibitem[BBJ21]{BBJ21} R.~J. Berman, S.Boucksom, M.Jonsson, 
\emph{{A variational approach to the  Yau-Tian-Donaldson conjecture}},
   J. Amer. Math. Soc. (2021) no. 3, 605-652.
   
     \bibitem[BDL17]{BDL17} R.~J. Berman, T.Darvas, H.C.Lu
\emph{Convexity of the extended K-energy and the large time behavior of the weak Calabi flow.}
Geom. Topol. Volume 21, Number 5 (2017), 2945-2988.
   
   
   \bibitem[Blo93]{Blo93} Z.Blocki,
\emph{ Estimates for the complex Monge-Amp\`ere operator.}
 Bull. Polish Acad. Sci. Math. 41 (1993), no. 2, 151-157. 

 
\bibitem[Blo06]{Blo06} Z.Blocki,
\emph{ The domain of definition of the complex Monge-Amp\`ere operator. }
Amer. J. Math. 128 (2006), no. 2, 519-530. 


 
\bibitem[BEGZ10]{BEGZ10} S.~Boucksom, P.~Eyssidieux, V.~Guedj,  A.~Zeriahi, 
\emph{Monge-{A}mp\`ere  equations in big cohomology classes}, 
Acta Math. \textbf{205} (2010), no.~2, 199--262. 
 
 \bibitem[Ceg83]{Ceg83} U.Cegrell,
\emph{Discontinuit\'e de l'op\'erateur de Monge-Amp\`ere complexe.}
C. R. Acad. Sci. Paris Sér. I Math. 296 (1983), no. 21, 869-871. 
 
 \bibitem[Ceg04]{Ceg04} U.Cegrell,
\emph{The general definition of the complex Monge-Amp\`ere operator.}
 Ann. Inst. Fourier (Grenoble) 54 (2004), no. 1, 159-179. 
 
 \bibitem[CDS15]{CDS15} X.X.~Chen, S.~Donaldson, S.~Sun,
\emph{K\"ahler-Einstein metrics on Fano manifolds, I, II \&  III }, 
  J. Amer. Math. Soc. 28 (2015), 183-197, 199-234 \& 235-278.
 
   \bibitem[CG09]{CG09} D.Coman, V.~Guedj,  
\emph{Quasi-plurisubharmonic Green functions.}
Journal Math. Pures et Appl. 92 (2009), 521-562.
 
  \bibitem[CGZ08]{CGZ08} D.Coman, V.~Guedj, A.~Zeriahi, 
\emph{ Domains of definition of Monge-Amp\`ere operators on compact K\"ahler manifolds.}
 Math. Z. 259 (2008), no. 2, 393-418.
 
   \bibitem[CGSZ19]{CGSZ19} D.Coman, V.~Guedj, S.Sahin, A.~Zeriahi, 
\emph{Toric pluripotential theory.}
Ann. Polon. Math. 123 (2019), no1, 215-242.
 
   \bibitem[Darv15]{Darv15} T.Darvas, 
\emph{Mabuchi geometry of finite energy classes.}
 Adv. Math. 285 (2015), 182-219. 
 
    \bibitem[Darv21]{Darv21} T.Darvas, 
\emph{ The Mabuchi geometry of low energy classes}.
 Preprint, arXiv:2109.11581
 
  \bibitem[DDL18]{DDL18} T.Darvas, E.DiNezza, H.C.Lu, 
\emph{On the singularity type of full mass currents in big cohomology classes}.
Compos. Math. 154 (2018), no. 2, 380-409
 
  \bibitem[DDL21]{DDL19} T.Darvas, E.DiNezza, H.C.Lu, 
\emph{The metric geometry of singularity types. }
J. Reine Angew. Math. 771 (2021), 137-170. 

  \bibitem[Dem92]{Dem92} J.-P.Demailly,
\emph{ Regularization of closed positive currents and intersection theory. }
J. Algebraic Geom. 1 (1992), no. 3, 361-409.

  \bibitem[Din09]{Din09} S.Dinew, 
\emph{Uniqueness in {$\mathcal E(X,\omega)$}}, J. Funct. Anal.
  \textbf{256} (2009), no.~7, 2113--2122.  

  \bibitem[Din16]{Din16} S.Dinew, 
\emph{Pluripotential theory on compact Hermitian manifolds. }
Ann. Fac. Sci. Toulouse Math. (6) 25 (2016), no. 1, 91-139.
 
  
 \bibitem[DH12]{DH12} S.Dinew, P.H.Hiep,
\emph{Convergence in capacity on compact K\"ahler manifolds.}
 Ann. Sc. Norm. Super. Pisa Cl. Sci. (5) 11 (2012), no. 4, 903-919. 
 

 
    \bibitem[DL17]{DL17} E.DiNezza,  H.C.Lu, 
\emph{Uniqueness and short time regularity of the weak K\"ahler-Ricci flow}.
 Adv. Math. 305 (2017), 953-993.
 
 \bibitem[Don18]{Don18} S.~Donaldson, 
\emph{Some recent developments in K\"ahler geometry and   exceptional holonomy}, 
   Proc.  Int. Cong. of Math., Rio de Janeiro 2018. Vol. I. 
   425--451, World Sci. Publ. 2018.
 
 \bibitem[GLZ19]{GLZ19} V.~Guedj, H.C.Lu, A.~Zeriahi, 
\emph{ Plurisubharmonic envelopes and supersolutions.}
 J. Differential Geom. 113 (2019), no. 2, 273-313.
 
 \bibitem[GZ05]{GZ05} V.~Guedj, A.~Zeriahi, 
\emph{Intrinsic capacities on compact K\"ahler manifolds.}
J. Geom. Anal. 15 (2005), no. 4, 607-639. 

\bibitem[GZ07]{GZ07} V.~Guedj, A.~Zeriahi, 
\emph{The weighted {M}onge-{A}mp\`ere energy of  quasiplurisubharmonic functions}, 
J. Funct. Anal. \textbf{250} (2007), no.~2,  442--482.  

 \bibitem[GZ17]{GZ17} V.~Guedj, A.~Zeriahi, 
\emph{ Regularizing properties of the K\"ahler-Ricci flow}.
Journal f\"ur reine und ang. Math., 729 (2017), 275-304.

\bibitem[GZ]{GZbook} V.~Guedj, A.~Zeriahi, 
\emph{Degenerate complex {M}onge-{A}mp\`ere  equations}, 
EMS Tracts in Mathematics, vol.~26, European Mathematical Society  (EMS), Z\"{u}rich, 2017.


\bibitem[Gup22]{Gup22} P.Gupta,
\emph{A complete metric topology on relative low energy spaces.}
Preprint arXiv:2206.03999 (2022).

\bibitem[H08]{H08}   P.H.Hiep,
\emph{On the convergence in capacity on compact K\"ahler manifolds and its applications.}
 Proc. Amer. Math. Soc. 136 (2008), no. 6, 2007-2018. 

\bibitem[Kol03]{Kol03}  S.Kolodziej
\emph{The Monge-Amp\`ere equation on compact K\"ahler manifolds.}
 Indiana Univ. Math. J. 52 (2003), no. 3, 667-686. 
 


    \bibitem[Li22]{Li22} C.Li,
{\it G-uniform stability and K\"ahler-Einstein metrics on Fano varieties. }
Invent. Math. 227 (2022), no. 2, 661-744.

 \bibitem[LTW20]{LTW20} C. Li, G. Tian, F. Wang,
{\em On Yau-Tian-Donaldson conjecture for singular Fano varieties},
Comm. Pure Appl. Math. (2020).

     \bibitem[LWZ22]{LWZ22} Y.Liu , X.Wang, Z.Zhang,
   \emph{  Finite generation for valuations computing stability thresholds and applications to K-stability.}
   Annals of Math. 196 (2022), Issue 2, 507-566. 
  
  \bibitem[Tr20]{Tr20} A.Trusiani, 
\emph{The strong topology of $\omega$-plurisubharmonic functions.}
Preprint arXiv:2002.00665 (2020), to appear in Analysis \& PDE.

  \bibitem[Tr22]{Tr22} A.Trusiani, 
\emph{L1  metric geometry of potentials with prescribed singularities on compact K\"ahler manifolds.}
 J. Geom. Anal. 32 (2022), no. 2, Paper No. 37, 37 pp.

  \bibitem[Xing96]{Xing96} Y.Xing,
\emph{Continuity of the complex Monge-Amp\`ere operator.}
 Proc. Amer. Math. Soc. 124 (1996), no. 2, 457-467.  
 
   \bibitem[Xing09]{Xing09} Y.Xing,
\emph{ Continuity of the complex Monge-Amp\`ere operator on compact K\"ahler manifolds.}
 Math. Z. 263 (2009), no. 2, 331-344. 

\end{thebibliography}
  \end{document}